%%%%%%%% ICML 2020 EXAMPLE LATEX SUBMISSION FILE %%%%%%%%%%%%%%%%%

\documentclass{article}

% Recommended, but optional, packages for figures and better typesetting:
\usepackage{microtype}
\usepackage{graphicx,epstopdf}
\usepackage{subfigure}
\usepackage{booktabs} % for professional tables
 
\usepackage{soul}
\usepackage[normalem]{ulem}
\usepackage[prependcaption,textsize=tiny]{todonotes}
%\graphicspath{{../code/}}

\RequirePackage{amsmath}
\RequirePackage{amssymb}
\RequirePackage{amsthm}
\usepackage[T1]{fontenc}
\usepackage{enumerate}

\newcommand{\RR}{\mathbb{R}}
\def\beq{\begin{equation} }\def\eq{\end{equation} }

\def\1{\mathbf{1}}
\newcommand{\argmin}{\mathop{\mathrm{argmin}}}
  
% math

\newcommand{\abs}[1]{\left | #1\right |}

\newcommand{\norm}[1]{\left\Vert#1\right\Vert}

\newcommand{\e}{ \operatorname{\mathbb E} }

 \newcommand{\std}{\operatorname{std} } 
 
 \newcommand{\eps}{\epsilon}

\ifx\theorem\undefined
\newtheorem{theorem}{Theorem}
\fi
\ifx\assumption\undefined

\fi
\ifx\proposition\undefined
\newtheorem{proposition}[theorem]{Proposition}
\fi
\ifx\remark\undefined
\newtheorem{remark}{Remark}
\fi
\ifx\definition\undefined
\newtheorem{definition}[theorem]{Definition}
\fi

\ifx\corollary\undefined
\newtheorem{corollary}[theorem]{Corollary}
\fi

%\def\red#1{}\def\pb{}
   %\usepackage{refcheck}

% hyperref makes hyperlinks in the resulting PDF.
% If your build breaks (sometimes temporarily if a hyperlink spans a page)
% please comment out the following usepackage line and replace
% \usepackage{icml2019} with \usepackage[nohyperref]{icml2019} above.
\usepackage{hyperref}

% Attempt to make hyperref and algorithmic work together better:

% Use the following line for the initial blind version submitted for review:
%\usepackage{icml2020}

% If accepted, instead use the following line for the camera-ready submission:
\usepackage[accepted]{icml2020}

% The \icmltitle you define below is probably too long as a header.
% Therefore, a short form for the running title is supplied here:
 \icmltitlerunning{Continuous Model of Stochastic ADMM}

\begin{document}

\twocolumn[
\icmltitle{
Stochastic Modified Equations for Continuous Limit of Stochastic ADMM
}

% It is OKAY to include author information, even for blind
% submissions: the style file will automatically remove it for you
% unless you've provided the [accepted] option to the icml2019
% package.

% List of affiliations: The first argument should be a (short)
% identifier you will use later to specify author affiliations
% Academic affiliations should list Department, University, City, Region, Country
% Industry affiliations should list Company, City, Region, Country

% You can specify symbols, otherwise they are numbered in order.
% Ideally, you should not use this facility. Affiliations will be numbered
% in order of appearance and this is the preferred way.
\icmlsetsymbol{equal}{*}

\begin{icmlauthorlist}
\icmlauthor{Xiang Zhou}{cityu}
\icmlauthor{Huizhuo Yuan}{pku}
\icmlauthor{Chris Junchi Li}{bc}
\icmlauthor{Qingyun Sun}{st}
\end{icmlauthorlist}

\icmlaffiliation{cityu}{School of Data Science and Department of Mathematics,
City University of Hong Kong, Hong Kong, China}

\icmlaffiliation{pku}{Peking University, China}
\icmlaffiliation{bc}{Department of EECS, UC Berkeley, USA}
\icmlaffiliation{st}{Department of Mathematics
Stanford University}

\icmlcorrespondingauthor{Xiang Zhou}{xizhou@cityu.edu.hk}

% You may provide any keywords that you
% find helpful for describing your paper; these are used to populate
% the "keywords" metadata in the PDF but will not be shown in the document
\icmlkeywords{Machine Learning, ICML}

\vskip 0.3in
]

% this must go after the closing bracket ] following \twocolumn[ ...

% This command actually creates the footnote in the first column
% listing the affiliations and the copyright notice.
% The command takes one argument, which is text to display at the start of the footnote.
% The \icmlEqualContribution command is standard text for equal contribution.
% Remove it (just {}) if you do not need this facility.

\printAffiliationsAndNotice{}  % leave blank if no need to mention equal contribution
%\printAffiliationsAndNotice{\icmlEqualContribution} % otherwise use the standard text.

\begin{abstract}
Stochastic version of alternating direction method of multiplier (ADMM) and its variants (linearized ADMM, gradient-based ADMM) plays key role for modern large scale machine learning problems.
One example is regularized empirical risk minimization problem.
In this work, we put different variants of stochastic ADMM into a unified form, which includes standard, linearized and gradient-based ADMM with relaxation, and study their dynamics via a continuous-time model approach. We adapt the mathematical framework of stochastic modified equation (SME), and show that the dynamics of stochastic ADMM is approximated by a class of stochastic differential equations with small noise parameters in the sense of weak approximation. The continuous-time analysis would uncover important analytical insights into the behaviors of the discrete-time algorithm, which are non-trivial to gain otherwise. For example, we could characterize the fluctuation of the solution paths precisely, and decide optimal stopping time to minimize variance of solution paths.

% There are great interests in analyzing the first-order optimization methods through the lens of the continuos-time model such as ordinary differential equations and stochastic differential equations with small noise. In this paper, we focus on the alternating direction method of multiplier (ADMM) and in particular, its various stochastic versions based on the idea similar to the stochastic gradient descent (SGD). The stochastic ADMM is important for the empirical risk minimization where the loss is the sum from a vast number of data samples. 
% We study a   family  of stochastic ADMM in a unified form and apply the technique of stochastic modified equation (SME) which was originally developed for the understanding of the SGD, to study the corresponding SME for stochastic variants of ADMM.  Compared with the existing continuous ordinary differential models,
% our SME shed lights on the fluctuation of the solution paths. We anticipate these results will offer some insights on the ADMM and adaptive acceleration in the stochastic settings.
\end{abstract}

\section{Introduction}

For modern industrial scale machine learning problems with massive amount of data, stochastic first-order methods almost become the default choice. Additionally, the datasets are not only extremely large, but often  stored or even collected in a distributed manner. Stochastic version oflternating direction method of multiplier(ADMM) algorithms are popular approachs to handle this distributed setting, especially for the regularized empirical risk minimization problems.
 
Consider the following stochastic optimization problem:
\beq\label{opt_eq}
\underset{x\in \RR^d}{\text{minimize}} \ V(x):= f(x) + g(Ax),
\eq
where $f(x)=\e_{\xi} \ell (x,\xi)$ with $\ell$ as the loss incurred  on a sample $\xi$, $f:\RR^d\rightarrow\RR\cup \{+\infty\}$, $g:\RR^m\rightarrow\RR \cup \{+\infty\}$, $A\in\RR^{m\times d}$, and both $f$ and $g$ are convex and differentiable. 
 The stochastic version of alternating direction method of multiplier (ADMM) \cite{admm3boyd} is to rewrite \eqref{opt_eq} as a constrained optimization problem 
\beq\label{opt_eq2}
\begin{aligned}
&\underset{x\in \RR^d, z\in \RR^m}{\text{minimize}} &&\ \e_{\xi} f(x,\xi) + g(z)
\\
&\text{subject to} &&\  Ax - z = 0.
\end{aligned}
\eq
Here and  through the rest of the paper,  we start to use the same   $f$ for both the   stochastic instance and  the expectation to ease the notation.
In the batch learning setting, $f(x)$ is approximated by the empirical risk function
$f_{emp}=\frac{1}{N}\sum_{i=1}^N f(x,\xi_i)$. However,
 to minimize $f_{emp}$ with a  large amount of samples,
 the computation is   less efficient under time and resource constraints.
In the stochastic setting, in each iteration $x$ is updated based on one noisy sample  $\xi$
 instead of a full training set.

Note that the classical setting of linear constraint $Ax + Bz = c$ can be reformulated as $z = Ax$ by a simple linear transformation operation when $B$ is invertible.
 
%
%The augmented Lagrangian function is 
%\begin{equation}
%\label{L}
%\begin{split}
%\mathcal{L}_\rho (x,z,u)
%    = & \e_{\xi} f(x,\xi) + g(z) + \rho  u^\top (Ax-z) + \frac{\rho}{2} 
%    \| A x - z\|^2_2
%    \\
%  = &  \e_{\xi} f(x,\xi) + g(z) +   \frac{\rho}{2} 
%    \| A x - z + u\|^2_2    - \frac{\rho}{2} u^2  
%    \end{split}
%\end{equation}
%where $u$ is the scaled Lagrangian multiplier ($\rho u$ is the true Lagrangian multiplier $\lambda$)
%and $\rho>0$ is a constant as the penalty parameter.

 One of the  main ideas in the stochastic ADMM is in parallel  to the stochastic gradient descent (SGD).
 At iteration $k$, 
 an {\it iid} sample $\xi_{k+1}$ is drawn from the distribution of $\xi$.
 A straightforward  application of   this SGD idea to 
the  ADMM for solving  \eqref{opt_eq2} leads to 
the following  {\bf stochastic ADMM (sADMM)}
\begin{subequations} \label{sADMM}
\begin{align}
 \label{sADMM_x}
x_{k + 1} &=\argmin_{x} \left\{ f(x,\xi_{k+1}) + \frac{\rho}{2} \| A x - z_k + u_k
            \|_2^2\right\},
\\
\label{sADMM_z}
z_{k + 1}
&=
\argmin_{z}\bigg\{ g(z) + \frac{\rho}{2} \left\|   \alpha A x_{k+1} +(1-\alpha)z_k
   - z + u_k
        \right\|_2^2\bigg\},
        \\
  \label{sADMM_u}
u_{k + 1}&=u_k +  (\alpha  A x_{k+1}  +(1-\alpha)z_k
        - z_{k+1}). \end{align}
\end{subequations}

Here $\alpha\in (0,2)$ is introduced as  a relaxation parameter \cite{eckstein1992douglas,admm3boyd}.
When $\alpha=1$, the relaxation scheme becomes the standard  ADMM.  The over-relaxation case is 
that $\alpha>1$  and it can accelerate  the convergence toward to the optimal solution \cite{pmlr-v97-yuan19c}.

% The application of the ADMM for data-based machine needs to address    stochastic and composite objective functions.
% For example in the structural risk minimization principle,  the objective function 
% $f(x)+g(Ax)$ takes the form of  the regularized expected risk function:
% $f(x)=\e_{\xi} \ell (x,\xi)$ with $\ell$ as the loss incurred  on a sample $\xi$, and $g(Ax)$
% is the regularizer with the matrix $A$ describe the desired patterns in the signal $x$.

% The stochastic setting for \eqref{opt_eq},
% is to assume that the function $f(x)$ is the expectation of a stochastic function $f(x,\xi)$
% \beq
% f(x) = \e_{\xi} f(x,\xi)
% \eq 

 %  Equivalently, one may  use $f(x,\xi_{k+1})$ to replace $f$ in the augmented  Lagrangian  function \eqref{L}
 %  at iteration $k$, or we can simply say to  use $f(x,\xi)$ as the replacement of $f$ 
 %  with the caution that    $\xi$ here refers to a generic random variable.
 %   There are many variants of deterministic ADMM.
 % By following this principle, 
 % one can have the corresponding stochastic version for each deterministic variants
 % by formally changing $f(x)$ to $f(x,\xi)$.
 % In most cases below, we only state the variants for deterministic ADMM
 % and the stochastic versions carry on straightforwardly.

\subsection{Variants of ADMM and Stochastic ADMM}

Many variants of the classical ADMM have been recently developed.
  These are  two types of common    modifications 
    in many variants of ADMM in order to cater for requirements of different applications. 
  \begin{enumerate}
 \item 
In the linearized  ADMM\cite{goldfarb2013fast}, the augmented Lagrangian function 
 is approximated by  the linearization of quadratic term of $x$ in \eqref{sADMM_x}
and the addition of a proximal term $\frac{\tau}{2}\norm{x-x_k}^2_2$: 
\begin{equation}
\begin{split}
&x_{k+1}: =\argmin_{x} 
\bigg\{
f(x,\xi_{k+1})  + 
\\&  \frac{\tau}{2} 
\norm{x- \left(x_k - \frac{\rho}{\tau} A^\top (Ax_k -y_k+u_k) \right) }^2_2      
\bigg\}.
\end{split}
\end{equation}
 
 \item
The gradient-based ADMM is to solve \eqref{sADMM_x}
inexactly by applying only one step gradient descent for all  $x$-nonlinear terms in $\mathcal{L}_\rho$ 
 with the  step size $1/\tau$: 
$$x_{k+1}   :=x_k  - \frac{1}{\tau} 
\left (f'(x_k,\xi_{k+1}) + \rho A^\top (Ax_k - z_k +u_k) \right).
$$
\end{enumerate}

To accommodate these variants all into one  stochastic setting, 
we formulate a very general scheme to unify   all above cases   
  in the form of stochastic version of ADMM:

 {\bf General stochastic ADMM (G-sADMM)}
 \begin{subequations}
\label{GSADMM}
\begin{align}
x_{k+1} &: =\argmin_{x} 
\hat{\mathcal{L}}_{k+1}(x,z_k,u_k),\\
\label{GsADMM_z}
\begin{split}
z_{k + 1}
&=
\argmin_{y}\bigg\{ g(z) +
\\
&~\quad
\frac{\rho}{2} \left\|   \alpha Ax_{k+1} + (1-\alpha) z_k
   - z + u_k
        \right\|_2^2\bigg\},
        \end{split}
        \\
  \label{GsADMM_u}
u_{k + 1}&=u_k +  (  \alpha Ax_{k+1} + (1-\alpha) z_k  
        - z_{k+1}).
\end{align}
\end{subequations}
 where   the approximate objective function for $x$-subproblem is 
 \begin{equation}
 \begin{split}
\hat{\mathcal{L}}_{k+1}=
&(1-\omega_1)  f(x,\xi_{k+1}) + \omega_1    f'(x_k,\xi_{k+1}) (x-x_k)
\\
&+ (1-\omega)   \frac{\rho}{2}  \| A x - z_k + u_k\|^2_2 
\\
&+\omega  \big( \rho A^\top (A x_k -z_k+u_k) (x-x_k) \big)
\\
&+ \frac{\tau}{2}\norm{x-x_k}_2^2.
\end{split}
\end{equation}
 The explicitness parameters 
$\omega_{1}, \omega \in [0,1]$
and the proximal parameter $\tau \geq 0$.
This scheme \eqref{GSADMM} is very general and 
includes  existing variants  as follows.
\begin{enumerate}
\item  $f(x,\xi)\equiv f(x)$: deterministic version of ADMM: 
\item 
  $\omega_1=\omega=\tau=0$: the  standard stochastic ADMM ({\bf sADMM}); 
 \item  $\omega_1=0$ and $\omega=1$: this scheme is the
stochastic version of the linearized ADMM;
\item $\omega_1=1$ and   $\omega=1$:  this scheme is the
stochastic version of the gradient-based ADMM.
\item  
$\alpha=1$, $\omega_1=1$, $\omega=0$ and $\tau=\tau_k\propto \sqrt{k}$:
the stochastic ADMM considered in \cite{pmlr-v28-ouyang13}.
\end{enumerate}

 \subsection{Main Results}

Define  $V(x) = f(x) + g(Ax)$.  
Let $\alpha\in (0,2)$, $\omega_1, \omega\in \{0,1\}$ and $c=\tau/\rho\geq 0$.
Let $\eps=\rho^{-1}\in (0,1)$.
$\{x_k\}$ denote the sequence of stochastic ADMM \eqref{GSADMM}
with the initial choice $z_0=Ax_0$.
Define $X_t$ as a stochastic process satisfying the SDE
\begin{equation*}
\widehat{M}  dX_t = - \nabla V(X_t) dt + \sqrt{\eps} \sigma(X_t) dW_t
\end{equation*}where the matrix 
 $$
\widehat{M} := 
c+\left(\frac{1}{\alpha}-\omega\right)A^\top A.
$$ and $\sigma$ satisfies 
$$\sigma(x)\sigma(x)^\top = \e_\xi
\left[
\left ( f'(x,\xi)-f'(x)  \right)\left ( f'(x,\xi)-f'(x)  \right)^\top 
\right].$$ Then we have $x_k\to X_{k\eps}$
with a   weak convergence  of order one.
 \subsection{Review and Related Work}

{\it Stochastic and online ADMM}

 The use of stochastic and online techniques 
 for ADMM have recently drawn a lot of interest.
 \cite{WangBanerjeeIMCL2012} first proposed the online ADMM in the standard form, which learns from only one sample (or a small mini-batch) at a time. 
   \cite{pmlr-v28-ouyang13,suzuki2013dual}  proposed the variants of stochastic ADMM
  to attack the difficult nonlinear optimization problem inherent in $f(x,\xi)$   by linearization.
  Very recent, further accelerated algorithms for  the stochastic ADMM have been developed in \cite{zhong2014fast,pmlr-v97-huang19a}

{\it  Continuous models for optimization algorithms}

 In our work,   we focus on the limit of  the stochastic sequence $\{x_k\}$ defined by \eqref{sADMM} 
 and \eqref{GSADMM}   as  $\rho \to \infty$. 
  Define  $$\eps=\rho^{-1}.$$
Assume the proximal  parameter  $\tau$  is  linked to $\rho$ by 
  $\tau= c\rho$ with a constant $c>0$.
 Our interest here is   not about the numerical convergence of $x_k$ from the ADMM towards the optimal point 
 $x_*$ of the objective function as $k\to\infty$ for a {\it fixed } $\rho$, but the proposal of an appropriate continuous model whose 
 (continuous-time) solution $X_t$ is a good approximation to the sequence $x_k$ as $\rho\to\infty$.

 The work in \cite{su2016differential}  is one seminal work based on this perspective of using continuous-time dynamical system tools to analyze various existing discrete algorithms for   optimzation problems to   mode {Nesterov}'s accelerated gradient method.   For the applications to the ADMM,  the recent works in \cite{francca2018admm}
 establishes the first deterministic continuous-time models in the form of ordinary differential equation (ODE)
  for the smooth  ADMM and \cite{pmlr-v97-yuan19c} extends to  the non-smooth case via the differential inclusion model.

In this setting of continuous limit theory,  a  time duration  $T>0$ is fixed first 
so that the continuous-time model is mainly considered in this time interval $[0,T]$.
Usually a small parameter (such as step size) $\eps$ is identified with a correct scaling 
from the discrete algorithm, and used to partition the interval into $K=T/\eps$ windows.
The iteration index $k$  in the discrete algorithm is labelled from $0$ to $K$.  
The convergence of the discrete scheme to the continuous model means that,
with the same initial $X_0=x_0$, for any   $T>0$, as $\eps \to 0 $, then the error between $x_{k}$ and $X_{k\eps}$ measured in certain sense
converges to zero  for any $1\leq k\leq K$.

This continuous viewpoint and formulation has been successful for both deterministic and stochastic optimzation
algorithms in machine learning \cite{e2019continousML}.  The works in
\cite{LTE2017SME,LTE2019JMLR}  rigorously 
present  the mathematical connection of Ito stochastic differential equation (SDE)
with  stochastic gradient descent (SGD) with a step size $\eta$.  More precisely, 
for any small but finite $\eta>0$, the corresponding stochastic differential equation 
carries a small parameter $\sqrt{\eta}$  in its   diffusion terms and  is called 
{\it stochastic modified equation} (SME)  due to the historical reason in numerical analysis
for differential equations. The convergence between $x_k$ and $X_t$ is then formulated in the 
weak sense.
 This SME technique, originally arising from   the numerical analysis of SDE \cite{KPSDEbook2011}, is the major mathematical tool
for most stochastic or online algorithms.

%
%\cite{LTE2017SME,LTE2019JMLR}  stochastic modified equation of stochastic gradient descent
%
%
%\cite{pmlr-v28-ouyang13} {\it stochastic ADMM}
%
% {\it online ADMM}
%

 \subsection{Contributions}
 
% 
% In this paper, we use the SME tool to study the weak convergence of the stochastic ADMM.
%The main goal is to derive the underlying stochastic modified equation(SME), in the   small 
% $\eps$ regime,  for the general family of stochastic ADMM \eqref{GSADMM}.
% We only derive the model  up to the first order convergence, though it is already quite useful in practice and 
% shed light on the features of the stochastic ADMM. 
%The drift term of the SME is  the same as the previous ODE models in \cite{francca2018admm},
%but  the most important gain of the SME   is  
%the capability to capture the   fluctuations of the stochastic ADMM solution.
%For example, it can be used to show the standard deviation $\std(x_k)$  has the scaling $\sqrt{\eps}$, and 
%the SME can  even approximate the  time evolution of $\eps^{-1/2}\std(x_k)$ 
%very well by considering its continuous
%counterpart $\eps^{-1/2}\std(X_t)$.

%
%Based on the obtained SME for the $x$ component, we also derive the asymptotic 
%for the $z$ component, and even for the residual $r=Ax-z$. Therefore, we build 
%a holistic viewpoint on  the stochastic  ADMM.
%

\begin{itemize}
    
    \item 
    We demonstrate how to use mathematical tools like stochastic modified equation(SME) and asymptotic expansion to study the dynamics of stochastic ADMM in the   small 
  step-size (step-size for ADMM is $\eps = 1/\rho$) regime.
    \item We present an unified framework for variants of stochastic version of ADMM, linearized ADMM, gradient-based ADMM, and present a unified stochastic differential equation as their continuous-time limit under weak convergence.
    \item
    We are first to show that the drift term of the stochastic differential equation is the same as the previous ordinary differential equation models.
    \item
    We are first to show that the standard deviation of the solution paths  has the scaling $\sqrt{\eps}$. 
    Moreover,  we can even accurately compute the continuous
limit of the time evolution of  $\eps^{-1/2}\std(x_k)$, 
    $\eps^{-1/2}\std(z_k)$ and $\eps^{-1/2}\std(r_k)$ for the residual $r_k=Ax_k-z_k$. The joint fluctuations of $x, z, r$ is a new phenomenon that has not been studied in previous works on continuous-time analysis of stochastic gradient descent type algorithms.
    \item
    From our stochastic differential equation analysis, we could derive useful insights for practical improvements that are not clear without the continuous-time model. For example, we are able to precisely compute the diffusion-fluctuation trade-off, which would enable us to decide when to decrease step-size and increase batch size to 
accelerate  convergence of stochastic ADMM.
\end{itemize}

\subsection{Notations and Assumptions}
We use $\norm{\cdot}$ to denote the Euclidean two norm  if the subscript is not specified.
and all vectors are referred as column vectors. 
$f'(x,\xi)$, $g'(z)$ and $f''(x,\xi)$, $g''(z)$  refer  to the first (gradient) and second (Hessian) derivatives w.r.t. $x$.

The first  assumptions is 
{\bf Assumption I}:
   $f(x)$, $g$  and for each $\xi$, , $f(x,\xi)$, are closed proper convex functions;
   $A$ has full column rank.

Let $\mathcal{F}$ as  the set of functions of at most polynomial growth,
  $\varphi \in  \mathcal{F}$ if there exists constants  $C_1$, $\kappa$ > 0 such that
  \begin{equation}
  \label{fun:poly}
  |\varphi(x)|< C_1 (1+\norm{x}^\kappa)
  \end{equation}
 
 To apply the SME theory, we need the following assumptions \cite{LTE2017SME,LTE2019JMLR}
  {\bf Assumptions II}: 
    \begin{enumerate}[(i)]
  \item  $f(x)$, $f(x,\xi)$ and $g(z)$ are differentiable 
  and the second order derivative $f'', g'' $  are uniformly bounded in $x$, and 
 almost surely in $\xi$  for $f(x,\xi)$.
   $\e \norm{f'(x,\xi)}_2^2  $ is uniformly bounded in $x$.
 
  \item
  $f(x)$, $f(x,\xi)$, $g(x)$ and the partial derivatives up to order $5$ belong to $\mathcal{F}$
  and for $f(x,\xi)$, it means the almost surely in $\xi$, i.e. ,
  the constants $C_1$, $\kappa$ in \eqref{fun:poly} do not depend on $\xi$.
  \item  $f'(x)$ and $f'(x,\xi)$   
  satisfy a uniform growth condition: $\norm{f'(x)} + \norm {f'(x,\xi)}\leq C_2 (1+\norm{x})$ for a constant
  $C_2$ independent of $\xi$. 
  \end{enumerate}
  The conditions  (ii) and (iii) are inherited from \cite{LTE2017SME,Milstein1986} , which might 
  be relaxed in certain cases. Refer to remarks  in Appendix C of  \cite{LTE2017SME}.

\section{Weak Approximation to Stochastic ADMM}

In this section, we show the weak approximation to the stochastic ADMM \eqref{sADMM}
and the general family of stochastic ADMM variant \eqref{GSADMM}.
Appendix \ref{app:A} is a summary of the background of the weak approximation and the stochastic modified equation
for interested readers.

Given the noisy gradient  $f'(x,\xi)$ and its expectation $f'(x)=\e f(x,\xi)$,
we define the following matrix $\sigma(x)\in \RR^{d\times d}$ by 
 \begin{equation}\label{Sigma}
 \begin{split}
 \Sigma(x)&=\sigma(x)\sigma(x)^\top 
 \\
 &= \e_\xi
\left[
\left ( f'(x,\xi)-f'(x)  \right)\left ( f'(x,\xi)-f'(x)  \right)^\top 
\right].
\end{split}
\end{equation}
 
\begin{theorem}[SME for sADMM]
\label{Sthm}
Consider the standard stochastic ADMM 
without relaxation \eqref{sADMM} with $\alpha=1$.
Let $\eps=\rho^{-1}\in (0,1)$.
$\{x_k\}$ denote the sequence of stochastic ADMM % \eqref{GSADMM}
with the initial choice $z_0=Ax_0$.
 
Define $X_t$ as a stochastic process satisfying the SDE
\begin{equation}\label{SDE}
(A^\top A)\, dX_t = - \nabla V(X_t) dt + \sqrt{\eps} \sigma(X_t) dW_t
\end{equation}
where $V(x)=\e_\xi V(x,\xi)=\e_\xi f(x,\xi)+g(Ax)$ and the diffusion matrix $\sigma$ is  defined by \eqref{Sigma},
Then we have $x_k\to X_{k\eps}$
with the  weak convergence  of order $1$.
\end{theorem}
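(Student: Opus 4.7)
My plan is to reduce the theorem to the one-step moment-matching criterion of the weak-approximation theorem in the SME framework of \cite{LTE2017SME,LTE2019JMLR} and then invoke it. Weak order $1$ convergence follows from showing that, uniformly in $k\eps \le T$ and in $x_k$ on polynomially bounded sets,
\begin{align*}
\e[x_{k+1}-x_k \mid x_k] &= -\eps\,(A^{\top}A)^{-1}\nabla V(x_k) + O(\eps^2),\\
\cov[x_{k+1}-x_k \mid x_k] &= \eps^2\,(A^{\top}A)^{-1}\Sigma(x_k)(A^{\top}A)^{-1} + O(\eps^3),\\
\e[\norm{x_{k+1}-x_k}^{3}\mid x_k] &= O(\eps^{3}),
\end{align*}
which are exactly the first three increment moments of the SDE \eqref{SDE} over a time step of length $\eps$ (after inverting $A^{\top}A$), together with standard uniform-in-$k$ polynomial moment bounds on $\{x_k\}$.

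First I would decouple the $(x,z,u)$ recursion. With $\alpha=1$, the $z$-optimality gives $g'(z_{k+1})=\rho(Ax_{k+1}-z_{k+1}+u_k)=\rho u_{k+1}$, so $u_{k+1}=\eps\,g'(z_{k+1})$; combined with the $u$-update $u_{k+1}-u_k=Ax_{k+1}-z_{k+1}=:r_{k+1}$, an induction initialized from $z_0=Ax_0$ and the invariant dual $u_0=\eps g'(z_0)$ (any other start contributes an $O(\eps)$ single-step transient, consistent with weak order $1$) yields $\norm{u_k}=O(\eps)$ and $\norm{r_k}=O(\eps^2)$ uniformly, with constants polynomial in $\norm{x_k}$ by Assumption II. The $x$-optimality combined with the identity $Ax_{k+1}-z_k+u_k=u_{k+1}+(z_{k+1}-z_k)$ then gives the clean recursion
\begin{equation*}
A^{\top}A\,(x_{k+1}-x_k) = -\eps\bigl[f'(x_{k+1},\xi_{k+1})+A^{\top} g'(z_{k+1})\bigr] - A^{\top}(r_{k+1}-r_k).
\end{equation*}
Substituting $z_{k+1}=Ax_{k+1}+O(\eps^2)$ and using $A^{\top}(r_{k+1}-r_k)=O(\eps^2)$ for $k\ge 1$ reduces this to $A^{\top}A\,(x_{k+1}-x_k) = -\eps\,\nabla V(x_{k+1},\xi_{k+1})+O(\eps^2)$, where I adopt the convention $\nabla V(x,\xi):=f'(x,\xi)+A^{\top} g'(Ax)$.

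Second, because $\rho A^{\top}A$ dominates the $x$-subproblem Hessian for small $\eps$, an implicit-function argument on the $x$-optimality condition shows $x_{k+1}-x_k=O(\eps)$. Taylor-expanding $\nabla V(x_{k+1},\xi_{k+1})=\nabla V(x_k,\xi_{k+1})+O(\eps)$ gives
\begin{equation*}
A^{\top}A\,(x_{k+1}-x_k) = -\eps\,\nabla V(x_k) + \eps\bigl[\nabla f(x_k)-f'(x_k,\xi_{k+1})\bigr] + O(\eps^2).
\end{equation*}
The bracketed noise has $\xi$-conditional mean zero, and its conditional covariance equals $\Sigma(x_k)$ by the very definition \eqref{Sigma}; this matches the target first and second moments above. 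The third moment bound $\e\norm{x_{k+1}-x_k}^3=O(\eps^3)$ follows from Assumption II(iii). Uniform polynomial moment bounds $\e\norm{x_k}^{2m}\le C_m$ are obtained by a discrete Gronwall inequality analogous to \cite{LTE2019JMLR}, using the linear growth hypothesis on $f'$ in Assumption II(iii). The weak approximation theorem then delivers $\abs{\e\varphi(x_k)-\e\varphi(X_{k\eps})}\le C\eps$ for every $\varphi\in\mathcal{F}$.

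The main obstacle, I expect, is the bookkeeping for the coupled residual dynamics: the cancellation $A^{\top}(r_{k+1}-r_k)=O(\eps^2)$ for $k\ge 1$ relies on the invariant $u_k=\eps g'(z_k)$, whose propagation must be controlled jointly with the implicit $x$-update, and the boundary term at $k=0$ must either be eliminated by choosing $u_0=\eps g'(Ax_0)$ or absorbed into the one-time $O(\eps)$ initial transient. Once this and the implicit-function expansion of $x_{k+1}$ in terms of $x_k$ are in place, the remaining moment matching and Gronwall-type stability estimates follow standard SME arguments.
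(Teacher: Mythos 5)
Your proposal is correct and follows essentially the same route as the paper: exploit the $z$-optimality identity $u_{k+1}=\eps\, g'(z_{k+1})$ (the paper's $\lambda_{k+1}=g'(z_{k+1})$), establish $r_k=Ax_k-z_k=\mathcal{O}(\eps^2)$ by induction from $r_0=0$ (the paper's Proposition~\ref{p:rp2}), reduce to the single $x$-recursion $A^\top A(x_{k+1}-x_k)=-\eps\,\nabla V(x_k,\xi_{k+1})+\mathcal{O}(\eps^2)$, and check the Milstein moment conditions. The only quibble is the sign of the $A^\top(r_{k+1}-r_k)$ term (it should enter with a plus), which is immaterial since that term is $\mathcal{O}(\eps^2)$ either way.
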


\begin{proof}[Sketch of proof]
The ADMM scheme is in a form of the iteration  of the triplet $(x,z,\lambda)$ where $\lambda=\eps u$.
But by the first order optimality condition for $z$-subproblem and $u$-subproblem, 
we have $\lambda_{k+1} = g'(z_{k+1})$   for whatever input triplet $(x_k,z_k,\lambda_k)$. 
Thus, the variable $\lambda$ is faithfully  replaced by   $g'(z)$. The remaining goal is to further 
replace the $z$ variable by the $x$ variable  that the ADMM iteration is approximately  reduced to the iteration
only for $x$ variable.   This is indeed true  because of the 
 critical observation (Proposition \ref{p:rp2})   that   the residual $r_k=Ax_k-z_k$ is  has a second order smallness, 
 belonging to  $\mathcal{O}(\eps^2)$, if $r_0=Ax_0-z_0=0$. 
Thus,   ADMM is transformed into the one-step iteration  form \eqref{eq:AAite} only in $x$ variable
with  $ \mathcal{A}(\eps, x,\xi) =  f'(x,\xi) + A^\top g'(Ax) + \mathcal{O}(\eps)$.
The conclusion then follows by directly checking the conditions  \eqref{con:mm} in Theorem \ref{thm:Milstein}.
\end{proof}

Our   main theorem is for the G-sADMM scheme which 
contains the relaxation parameter $\alpha$, the proximal parameter $c$ and the implicitness parameters $\omega,\omega_1$.

\begin{theorem}[SME for G-sADMM]
\label{Gthm}
Let $\alpha\in (0,2)$, $\omega_1, \omega\in \{0,1\}$ and $c=\tau/\rho\geq 0$.
Let $\eps=\rho^{-1}\in (0,1)$.
$\{x_k\}$ denote the sequence of stochastic ADMM \eqref{GSADMM}
with the initial choice $z_0=Ax_0$.

Define $X_t$ as a stochastic process satisfying the SDE
\begin{equation}\label{SDE}
\widehat{M}  dX_t = - \nabla V(X_t) dt + \sqrt{\eps} \sigma(X_t) dW_t
\end{equation}where the matrix 
\begin{equation}
\label{def:Mhat}
\widehat{M} := 
%M + \frac{1-\alpha}{\alpha}A^\top A=
c+\left(\frac{1}{\alpha}-\omega\right)A^\top A.
\end{equation}

Then we have $x_k\to X_{k\eps}$
in  weak convergence  of order $1$, with
the following precise meaning.

For any time interval $T>0$ and for any test function $\varphi$ 
such that $\phi$ and its partial derivatives up to order $4$ belong to 
$\mathcal{F}$,
there exists a constant $C$ such that 
\begin{equation}
\left | \e \varphi(X_{k\eps})-\e \varphi(x_{k})
\right| \leq C \eps , ~~ k \leq   \lfloor
T/\eps\rfloor
\end{equation}
\end{theorem}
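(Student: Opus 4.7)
The plan is to mirror the proof of Theorem \ref{Sthm}: eliminate the dual variable $u$, eliminate $z$ through a residual estimate, reduce G-sADMM to a one-step stochastic recursion in $x$ alone, and then verify the weak-convergence conditions \eqref{con:mm} of Theorem \ref{thm:Milstein} for the SDE \eqref{SDE}. The new ingredients compared with Theorem \ref{Sthm} are the parameters $\alpha,\omega,\omega_1,c$; among these, $\alpha\neq 1$ is the delicate one because relaxation destroys the $O(\eps^2)$ smallness of $r_k=Ax_k-z_k$ that made the reduction easy in the $\alpha=1$ case.

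First, the first-order optimality conditions for the $z$- and $u$-updates still yield $\lambda_k:=\rho u_k=g'(z_k)$ exactly as in Theorem \ref{Sthm}, so $\rho u_k$ can be replaced by $A^\top g'(Ax_k)+O(\|r_k\|)$ wherever it appears in the $x$-subproblem. Second, I would establish the residual asymptotics
\begin{equation*}
r_{k+1}=\tfrac{1-\alpha}{\alpha}\,A\,\delta_k + O(\eps^2),\qquad \delta_k:=x_{k+1}-x_k,
\end{equation*}
by combining $\lambda_{k+1}-\lambda_k=g'(z_{k+1})-g'(z_k)=g''(z_k)(z_{k+1}-z_k)+O(\eps^2)$ with the identity $\lambda_{k+1}-\lambda_k=\rho\alpha r_{k+1}-\rho(1-\alpha)(z_{k+1}-z_k)$ coming from the $u$-update, together with $z_{k+1}-z_k=A\delta_k-(r_{k+1}-r_k)$. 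At leading order only the $\rho$-terms survive, and solving for $r_{k+1}$ yields the stated relation; the case $\alpha=1$ then degenerates to $r_k=O(\eps^2)$, recovering Proposition \ref{p:rp2}.

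Substituting $\rho u_k=g'(z_k)=A^\top g'(Ax_k)+O(\eps)$ and $r_k\approx \tfrac{1-\alpha}{\alpha}A\delta_{k-1}\approx\tfrac{1-\alpha}{\alpha}A\delta_k+O(\eps^2)$ into the first-order condition for the $x$-subproblem, and expanding $f'(x_{k+1},\xi_{k+1})=f'(x_k,\xi_{k+1})+O(\eps)$ irrespective of $\omega_1$, the collected coefficient of $\delta_k$ becomes
\begin{equation*}
\rho\!\left[(1-\omega)+\tfrac{1-\alpha}{\alpha}\right]\!A^\top A + \tau I \;=\; \rho\!\left[\bigl(\tfrac{1}{\alpha}-\omega\bigr)A^\top A + cI\right] \;=\; \rho\,\widehat{M},
\end{equation*}
which is precisely the matrix defined in \eqref{def:Mhat}. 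The effective one-step iteration therefore reads
\begin{equation*}
\widehat{M}\,\delta_k = -\eps\,\nabla V(x_k) - \eps\bigl[f'(x_k,\xi_{k+1})-f'(x_k)\bigr] + O(\eps^2),
\end{equation*}
where the noise term has mean zero and conditional covariance $\eps^2\,\Sigma(x_k)$.

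At this point the proof reduces to checking the one-step moment-matching hypotheses of Theorem \ref{thm:Milstein}: the conditional mean matches the Euler drift $-\eps\widehat{M}^{-1}\nabla V(x_k)$, the conditional covariance matches $\eps^2\widehat{M}^{-1}\Sigma(x_k)\widehat{M}^{-\top}$ predicted by \eqref{SDE}, and the higher conditional moments are $O(\eps^2)$. Assumption II furnishes the polynomial-growth and smoothness bounds needed to propagate these estimates uniformly in $k\le\lfloor T/\eps\rfloor$. The main obstacle is the residual step: unlike the $\alpha=1$ case where $r_k=O(\eps^2)$ can simply be dropped, for $\alpha\neq 1$ the residual is only $O(\eps)$ and its $\rho$-multiplied contribution $\rho A^\top r_k$ is leading-order, so the coefficient $1/\alpha-\omega$ appearing in $\widehat{M}$ is generated entirely by the cancellation between this residual contribution and the explicit quadratic term in $\hat{\mathcal{L}}_{k+1}$. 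Carefully tracking the $O(\eps^2)$ remainders together with the (routine) bookkeeping from the $\omega_1$-weighted linearization of $f$, and ensuring that their moments remain controlled over $T/\eps$ steps, constitutes the remaining technical work.
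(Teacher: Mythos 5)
Your proposal is correct and follows essentially the same route as the paper: eliminate $u$ via $\lambda_k=g'(z_k)$, show that for $\alpha\neq 1$ the combination $\alpha r_{k+1}-(1-\alpha)(z_{k+1}-z_k)$ is $\mathcal{O}(\eps^2)$ so that $r_k=\frac{1-\alpha}{\alpha}A(x_{k+1}-x_k)+\mathcal{O}(\eps^2)$ (this is exactly the paper's ``$\alpha$-residual'' device in Proposition \ref{p:ar} and Corollary \ref{cor:rk}, including the same index-shift argument), fold that residual back into the $x$-optimality condition to produce the coefficient $\frac{1}{\alpha}-\omega$ in $\widehat{M}$, and verify the Milstein moment conditions. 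The only cosmetic differences are that you derive the residual estimate from the $u$-update identity rather than from the $z$-optimality condition (these are the same equation) and work with $\delta_k$ directly rather than the paper's $\delta\widehat{x}=\widehat{M}\delta x$.
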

\begin{proof}[Sketch of proof]
The idea of this proof is similar to that in Theorem \ref{Sthm}  
even with the introduction of $c,\omega,\omega_1$ parameters.
But for the relaxation parameter  when $\alpha \neq 1$, 
we need to overcome a substantial challenge.
If $\alpha\neq 1$, then the   residual $r_k=Ax_k-z_k$ is now only  at order  $\mathcal{O}(\eps)$,
not $\mathcal{O}(\eps^2)$. In the proof, we propose a new $\alpha$-residual  
$ \widehat{r}^\alpha_{k+1}:=\alpha r_k+(\alpha-1) (z_{k+1}-z_k)$ and show that
it is indeed as small as $\mathcal{O}(\eps^2)$ (Proposition \ref{p:ar}) to solve this challenge.
The difference between $r_k$ and the $\alpha$-residual thus induces the extra $\alpha$-term 
in the new coefficient matrix $\widehat{M}$ in \eqref{def:Mhat}.
\end{proof}
The rigorous proof is in Appendix \ref{sec:pfthm}.

\begin{remark}
We do not present a simple form of SME as the the second order weak approximation 
as for the SGD scheme, due to the complicated issue of the residuals.
In addition, the proof requires a regularity  condition for the functions $f$ and $g$;
at least $g$ needs to have the third order derivatives of $g$. So, 
our theoretic theorems can not cover the non-smooth function $g$.
Our numerical tests suggest that the conclusion holds too for $\ell_1$ regularization function $g(z)=\norm{z}_1$.
\end{remark}

\begin{remark}
In general applications, it is very difficulty to get the expression of the variance matrix
$\Sigma(x)$ as a function of $x$, except in very few simplified cases. 
In applications of empirical risk minization,
the function $f$ is the empirical average of the loss on each sample $f_i$:
$f(x)=\frac{1}{N}\sum_{i=1}^N f_i(x)$.
The diffusion matrix $\Sigma(x)$ in \eqref{Sigma} becomes the following form
\begin{equation}
\Sigma_N(x)= \frac{1}{N}\sum_{i=1}^N (f'(x)-f'_i(x))f'(x)-f'_i(x)^\top.
\end{equation}
It is clear that if $f_i(x)= f(x,\xi_i)$ with $N$ iid samples $\xi_i$,
then $\Sigma_N(x)\to \Sigma(x) $ as  $N\to \infty$.
\end{remark}

\begin{remark}
The stochastic scheme \eqref{GSADMM} 
is the simplest form of using only one instance of the
 gradient    $f'(x,\xi_{k+1})$ in each iteration.
 If a batch size larger than one is used, then
 the one instance gradient 
 $f'(x,\xi_{k+1})$ is replaced by the average 
 $\frac{1}{B_{k+1}}\sum_{i=1}^{B_{k+1}} f'(x,\xi_{k+1}^i)$
 where $B_{k+1}>1$ is the batch size and $(\xi_{k+1}^i)$
 are $B_{k+1}$ iid samples.
 Under these settings,
 $\Sigma$ should be multiplied by a fact
 $\frac{1}{B_t}$ where the continuous-time  function $B_t$ 
 is the linear interpolation of $ B_{k}$ at times $t_k=k\eps$.
 The stochastic modified equation \eqref{SDE} is then in the following form
 $
 \widehat{M} dX_t = - \nabla V(X_t) dt +  \sqrt{\frac{\eps}{B_t}}\sigma(X_t) dW_t.
$
 \end{remark}

Based on the SME above, we can find the 
  stochastic asymptotic expansion of $X^\eps_t$ 
\begin{equation}
\label{eq:Xexp12}
X^\eps_t \approx  X^0_t + \sqrt{\eps} X^{(1/2)}_t+ \eps X^{(1)}_t+\ldots.
\end{equation}
 See  Chapter 2 in \cite{FW2012} for rigorous justification.
$X^0_t$ is {\it deterministic} as  the gradient flow of the deterministic problem:
$\dot{X}^0_t=- V'(X^0_t)$,  $X^{(1/2)}_t$ and $X^{(1)}_t$ are {\it stochastic}
and satisfy certain SDEs independent of $\eps$.
The useful conclusion   is   that 
 the standard deviation of $X^\eps_t$,  mainly coming  from the term $\sqrt{\eps}X^{(1)}_t$, is $\mathcal{O}(\sqrt{\eps})$.
Hence, the standard deviation of  the stochastic ADMM $x_k$ is   $\mathcal{O}(\sqrt{\eps})$
and more importantly,  the rescaled two standard deviations  $\eps^{-1/2}\std(x_k)$ and $\eps^{-1/2}\std(X_{k\eps})$ 
are close  as the function of the   time $t_k=k\eps$.

We can investigate the fluctuation of the $z_k$ sequence  generated by the stochastic ADMM.
The approach is to study  the modified equation of its continuous version $Z_t$ first.
Since the residual $r=Ax-z$ is on the order $\mathcal{O}(\eps)$ shown in the appendix (Proposition \ref{p:rpp}
and \ref{p:rp2}), we have the following result.

\begin{theorem}
\label{Zthm}
~
\begin{enumerate}[(i)]
\item There exists a deterministic function $h(x,z)$ such that  
\begin{equation}
\dot{Z}^\eps_t=A\dot{X}^\eps_t+\eps h(X^\eps_t, Z^\eps_t)
\end{equation}
where $X^\eps_t$ is the solution to the SME in Theorem \eqref{Gthm}
and $\{z_k\}$ is a weak approximation to $\{Z^\eps_t\}$ with the order 1.
\item In addition, we have the following asymptotic for $Z^\eps_t$:
\begin{equation}
Z^\eps_t
% \approx  AX^\eps_t + \eps Z^{(1)}_t
 \approx
AX^0_t + \sqrt{\eps} AX^{(1/2)}_t + \eps Z^{(1)}_t 
\end{equation}
where ${Z}^{(1)}_t$ satisfies $\dot{Z}^{(1)}_t= h(X^0_t,AX^0_t)$.
\item The standard deviation of   $z_k$  is  on the  order 
$\sqrt{\eps}$.
\end{enumerate}
\end{theorem}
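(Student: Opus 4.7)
The plan is to derive all three parts by combining the residual estimate $r_k = Ax_k - z_k = \mathcal{O}(\eps)$ from Proposition \ref{p:rpp} with the $\alpha$-residual identity $\widehat{r}^\alpha_{k+1} = \alpha r_k + (\alpha-1)(z_{k+1}-z_k) = \mathcal{O}(\eps^2)$ from Proposition \ref{p:ar}, together with the weak-order-one approximation of $\{x_k\}$ already established in Theorem \ref{Gthm}.

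For part (i), I first exploit the first-order optimality condition for the $z$-subproblem in \eqref{GsADMM_z} together with the $u$-update \eqref{GsADMM_u} to obtain the dual identity $u_{k+1} = \eps\, g'(z_{k+1})$. Substituting this back and writing $z_k = Ax_k - r_k$ yields
\[
z_{k+1}-z_k \;=\; \alpha A(x_{k+1}-x_k) + \alpha r_k - \eps\bigl(g'(z_{k+1}) - g'(z_k)\bigr).
\]
Applying the $\alpha$-residual identity to eliminate $\alpha r_k$, Taylor-expanding $g'(z_{k+1})-g'(z_k) = g''(z_k)(z_{k+1}-z_k)+\mathcal{O}(\eps^2)$, and plugging in the leading one-step expansion $x_{k+1}-x_k = -\eps\widehat{M}^{-1}\nabla V(x_k) + \mathcal{O}(\sqrt{\eps})$ from the proof of Theorem \ref{Gthm}, I collect terms at order $\eps^2$ and obtain $z_{k+1}-z_k = A(x_{k+1}-x_k) + \eps^2 h(x_k,z_k) + o(\eps^2)$ with the explicit deterministic coefficient
\[
h(x,z) \;:=\; \tfrac{1}{\alpha}\, g''(z)\, A\,\widehat{M}^{-1}\nabla V(x).
\]
This is precisely the Euler discretization of $\dot Z^\eps_t = A\dot X^\eps_t + \eps\, h(X^\eps_t, Z^\eps_t)$. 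Because no new randomness enters the $z$-update beyond what is inherited through $x_{k+1}-x_k$, the weak-order-one convergence of $\{z_k\}$ to $Z^\eps_{k\eps}$ then follows from the Milstein-type framework of Theorem \ref{Gthm} applied to the augmented state $(x,z)$.

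For part (ii), I substitute the asymptotic expansion \eqref{eq:Xexp12} of $X^\eps_t$ into the ODE from (i) and match powers of $\sqrt{\eps}$. At leading order one gets $Z^{(0)}_t = AX^0_t$; at order $\sqrt{\eps}$ the $h$-term is absent (it is already $\mathcal{O}(\eps)$) so $Z^{(1/2)}_t = AX^{(1/2)}_t$; and at order $\eps$ the coefficient $Z^{(1)}_t$ satisfies the driven deterministic ODE $\dot Z^{(1)}_t = h(X^0_t, AX^0_t)$ evaluated along the noiseless gradient flow. Part (iii) is then immediate: the lowest-order stochastic term in the expansion of $Z^\eps_t$ is $\sqrt{\eps}\,AX^{(1/2)}_t$, whose standard deviation is of size $\mathcal{O}(\sqrt{\eps})$, and the weak approximation from (i) transfers this bound from $\std(Z^\eps_{k\eps})$ to $\std(z_k)$ by testing against the coordinate and product test functions.

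The main technical obstacle is controlling the remainder in the one-step identity of part (i) uniformly over $k\leq T/\eps$ so that the Milstein-type argument actually delivers weak order one, rather than being degraded by the joint small-$r_k$ and small-$\sqrt{\eps}$-noise structure. In particular, I must verify that the stochastic $\mathcal{O}(\sqrt{\eps})$ piece of $x_{k+1}-x_k$ enters $z_{k+1}-z_k$ only at order $\eps^{3/2}$ (not at order $\eps^2$) after passing through the $g''(z_k)$ factor, which requires the third-order smoothness of $g$ invoked in the remark after Theorem \ref{Gthm} together with the moment bounds on $f'(x,\xi)$ from Assumption II. Once those moment-stable remainder estimates are in place, the rest of the argument is a direct adaptation of the proof of Theorem \ref{Gthm}.
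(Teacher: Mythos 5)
Your argument is correct and follows essentially the same route as the paper's (largely implicit) proof: the one-step identity $z_{k+1}-z_k = A(x_{k+1}-x_k)+\mathcal{O}(\eps^2)$ is exactly Corollary \ref{cor:rk}, obtained there from the dual identity $\lambda_{k+1}=g'(z_{k+1})$ and the $\alpha$-residual bound of Proposition \ref{p:ar}, and parts (ii)--(iii) then follow by substituting the expansion \eqref{eq:Xexp12}, just as you do. The only caveats are minor: the per-step stochastic contribution to $x_{k+1}-x_k$ is $\mathcal{O}(\eps)$ rather than $\mathcal{O}(\sqrt{\eps})$ (the $\sqrt{\eps}$ scale emerges only after accumulation over $\sim 1/\eps$ steps, which is why it enters $z$ harmlessly), and your explicit formula for $h$ would require tracking the $\mathcal{O}(\eps^2)$ remainder in Proposition \ref{p:ar} exactly --- though since the theorem asserts only the existence of some deterministic $h$, this does not affect correctness.
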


Recall the residual $r_k=Ax_k-y_k$ 
and in view of Corollary \ref{cor:rk} in the appendix,
we have the following result that there exists a function $h_1$ such that 
\begin{equation}
\begin{split}
\alpha R^\eps_t =   (1-\alpha)  ({Z}^\eps_t - Z^\eps_{t-\eps})+\eps^2 h_1(X^\eps_t,Z^\eps_t)
\end{split}
\end{equation}
and the residual $\{r_k\}$ is a weak approximation to $\{R^\eps_t\}$ with the order 1.
If $\alpha= 1$ in the G-sADMM \eqref{GSADMM}, then the expectation and standard deviation of $R_t$ and $r_k$  are both at order $\mathcal{O}(\eps^2)$.
If $\alpha\neq 1$ in the G-sADMM \eqref{GSADMM}, then the expectation and standard deviation of  $R_t$ and $r_k$  are only at order $\mathcal{O}(\eps)$.
%  Define $\widehat{R}^{\alpha,\eps}_t:=\alpha R^\eps_t - \eps (1-\alpha)  \dot{Z}^\eps_t$,
%then 
%$\{\widehat{r}^\alpha_k\}$ is the weak approximation to $\{\widehat{R}^{\alpha,\eps}_t\}$ with the order 1.
%   If $\alpha\neq 1$, then the expectation and standard deviation of  $\widehat{r}^\alpha_{k+1}$
%are  at $\mathcal{O} (\eps^2)$ and $\mathcal{O}(\eps)$, respectively. 

%
%
%\begin{remark}
%$R_t=AX_t-Z_t $ is a continuous function in $t$; but
%$ \widehat{R}^{\alpha,\eps}_t$ is like the derivative of the Brownian motion, 
%\end{remark}

\section{Numerical Examples}

{\bf Example 1: one dimensional   example}
In this simple example,  the dimension $d=1$. 
Consider   $f(x,\xi)= (\xi+1)x^4+ (2+\xi) x^2-(1+\xi) x $,   
where  $\xi$ is a  Bernoulli random variable  taking  values  $-1$ or $+1$ 
with equal probability.
We test $g(z)=z^2 $ and $g(z)=\abs{z}$. 
The matrix $A=I$.
These settings satisfy the assumptions in our main theorem.
  We choose $c =\omega$ such that   $\widehat{M}=\frac{1}{\alpha}$.
The SME  when  $g(z)=z^2$
 is 
$\frac{1}{\alpha}dX_t =  -  (4x^3+6x-1)dt  + \sqrt{\eps} \abs{4x^3+2x-1}dW_t $.
The choice of the initial guess is 
$x_0=z_0=1.0$ and $\lambda_0=g'(z_0)$.
The terminal time $T= 0.5$ is fixed.

Figure \ref{fig:toy-mean-std-g2} shows the match of the 
expectation and the standard deviation of the sequence 
$x_k$ of stochastic ADMM and  $X_{t_k}$
of the SME with $t_k=k\eps$.
Furthermore, we plot Figure     $400$ random trajectories from both models
in Figure \ref{fig:toy-admm-sme-traj}. 
and it shows the fluctuation in the sADMM can be well capturedd by the SME model.
%This may further help adaptive choice of the step size $\eps$, i.e., the Lagrangian penalty parameter $\rho$.
%See \cite{LTE2017SME} for the use of the SME technique to  accelerate the classic SGD.

The acceleration effect of $\alpha$ for the deterministic ADMM has been shown in \cite{pmlr-v97-yuan19c}.
Figure \ref{fig:toy-meanx-g12} confirms the same effect both for smooth and non-smooth $g$
for the expectation of the solution sequence $x_k$.
\begin{figure}
\begin{center}
\includegraphics[width=0.4\textwidth]{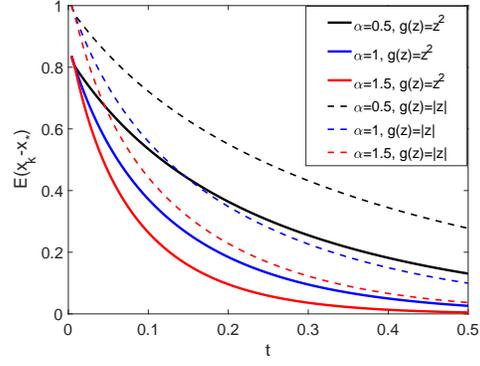}
\end{center}
\caption{The expectation of $x_k-x_*$ w.r.t. $\alpha$. $x_*$ is the true minimizer. 
The result is based on the average of 10000 runs.}
\label{fig:toy-meanx-g12}
\end{figure}

The SME  does not only provide the expectation of the solution, but also
provides the fluctuation of the numerical solution $x_k$ for any given $\eps$.
Figure  \ref{fig:toy-mean-std-g2} compares the mean and standard deviation (``std'') 
between $x_k$ and $X_{k\eps}$ at $\eta=2^{-7}$.
The right vertical axis is the value of standard deviation and the two $\std$ curves 
are very close.   In addition, with the same setting, a few hundreds of 
trajectory samples $x$ are shown together in Figure \ref{fig:toy-admm-sme-traj},
which illustrate the match both in the mean and in the std between
the stochastic ADMM and the SME.

\begin{figure}
\begin{center}
\includegraphics[width=0.4\textwidth]{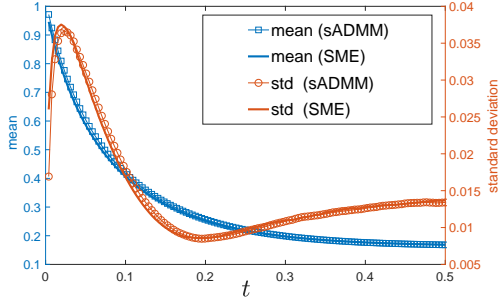}
\caption{ The expectation (left axis) and standard deviation (right axis) of $x_k$ (from stochastic ADMM) and $X_t$
(from stochastic modified equation) .  $\eps=2^{-7}$. The results are based on the 
average of $10000$ independent runs. The over-relaxation parameter $\alpha=1.5$ is used.
 }
 \label{fig:toy-mean-std-g2}
 \end{center}
\end{figure}

\begin{figure}
\begin{center}
\includegraphics[width=0.4\textwidth]{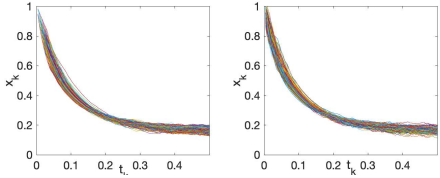}
\caption{ The $400$ sample trajectories from stochastic ADMM (left) and 
SME (right).   
 }
 \label{fig:toy-admm-sme-traj}
 \end{center}
\end{figure}

To verify our theorem on the convergence order,
a test function $\varphi(x)= x+x^2 $ is used for the test
of the weak convergence error: 
$$err: = \max_{1\leq k\leq  \lfloor T/\eps \rfloor}\abs{ \e \varphi(x_{ k})-\e \varphi(X_{k\eps}) }.$$

For each $m=4,5,\ldots, 11$,  
set
 $\rho =2^m/T$, so $\eps = T 2^{-m}$ and $k=1,2\ldots, 2^m$.
Figure \ref{fig:toy-weak-error} shows the error  $  err_m$
versus  $m$  in the semi-log plot for three values of 
relaxation parameter $\alpha$. The first order convergence rate 
$err_m \propto  \eps$ is verified.

\begin{figure}
\begin{center}
\label{fig:toy-weak-error}
\includegraphics[width=0.4\textwidth]{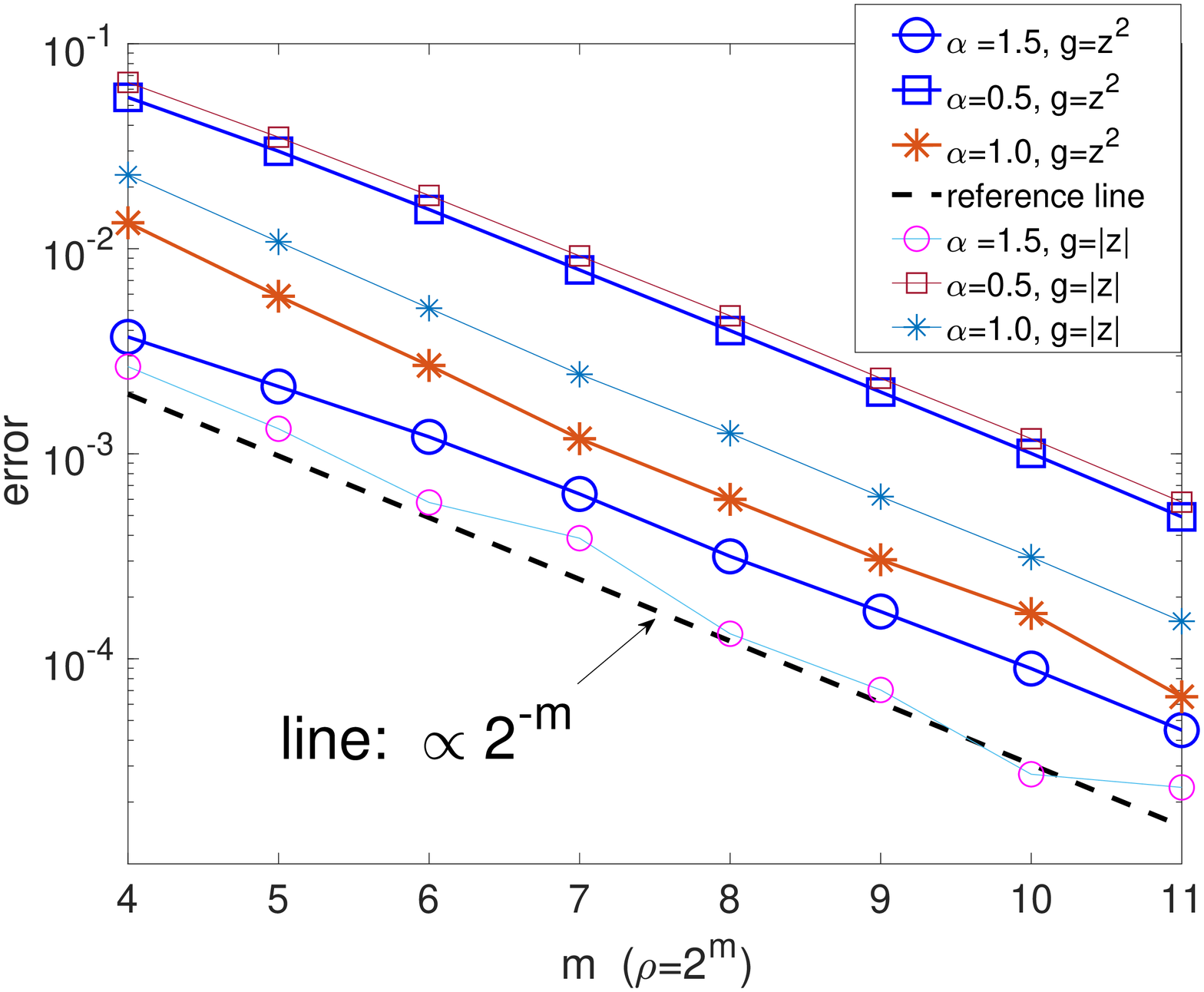}
\caption{(Verification of the first order approximation )The weak convergence error $err_m$ versus $m$ for 
various $\alpha$ and  $\ell_2$, $\ell_1$ regularization $g$.
The step size $\eps=1/\rho=2^{-m}T$.
$T=0.5$. The result is based on the average of $10^5$ independent runs.  }
\end{center}

\end{figure}

We also numerically investigated   the convergence rate 
for  the non-smooth penalty $g(z)=|z|$, even though this $\ell_1$ regularization function does not satisfy our assumptions. 
The diffusion term $\Sigma(x)$ is still the same as in the $\ell_2$  case since $g(z)$ is deterministic. 
For the corresponding SDE, at least formally, we can write 
$\frac{1}{\alpha}dX_t =   -  (4x^3+4x-1+\mbox{sign}(x))dt  + \sqrt{\eps} \abs{4x^3+2x-1}dW_t$,
by using the sign function as  $g'(z)$. The rigorous meaning needs the concept of stochastic differential inclusion, which
is out of the scope of this work.
The numerical results in Figure \ref{fig:toy-weak-error} shows that
the weak convergence order $1$ is also true for this $\ell_1$ case.

Finally, we   test the  orders  for the standard deviation of  $x_k$ and $z_k$. 
Th consistence of $\std(x_k)$ with the SME's $\std(X_{k\eps})$ has been shown in
Figure \ref{fig:toy-mean-std-g2}.
The theoretic  prediction is that both are at order $\sqrt{\eps}$.
We plot the  sequences of $\eps^{-1/2} \std(x_k) $ and 
$\eps^{-1/2} \std(z_k) $ for various $\eps$.
These two quantities  should be the same regardless of $\eta$,
and only depends on $\alpha$.
which is confirmed by Figure \ref{fig:toy-stdxz-g2}.
\begin{figure}
\begin{center}
\includegraphics[width=0.3\textwidth]{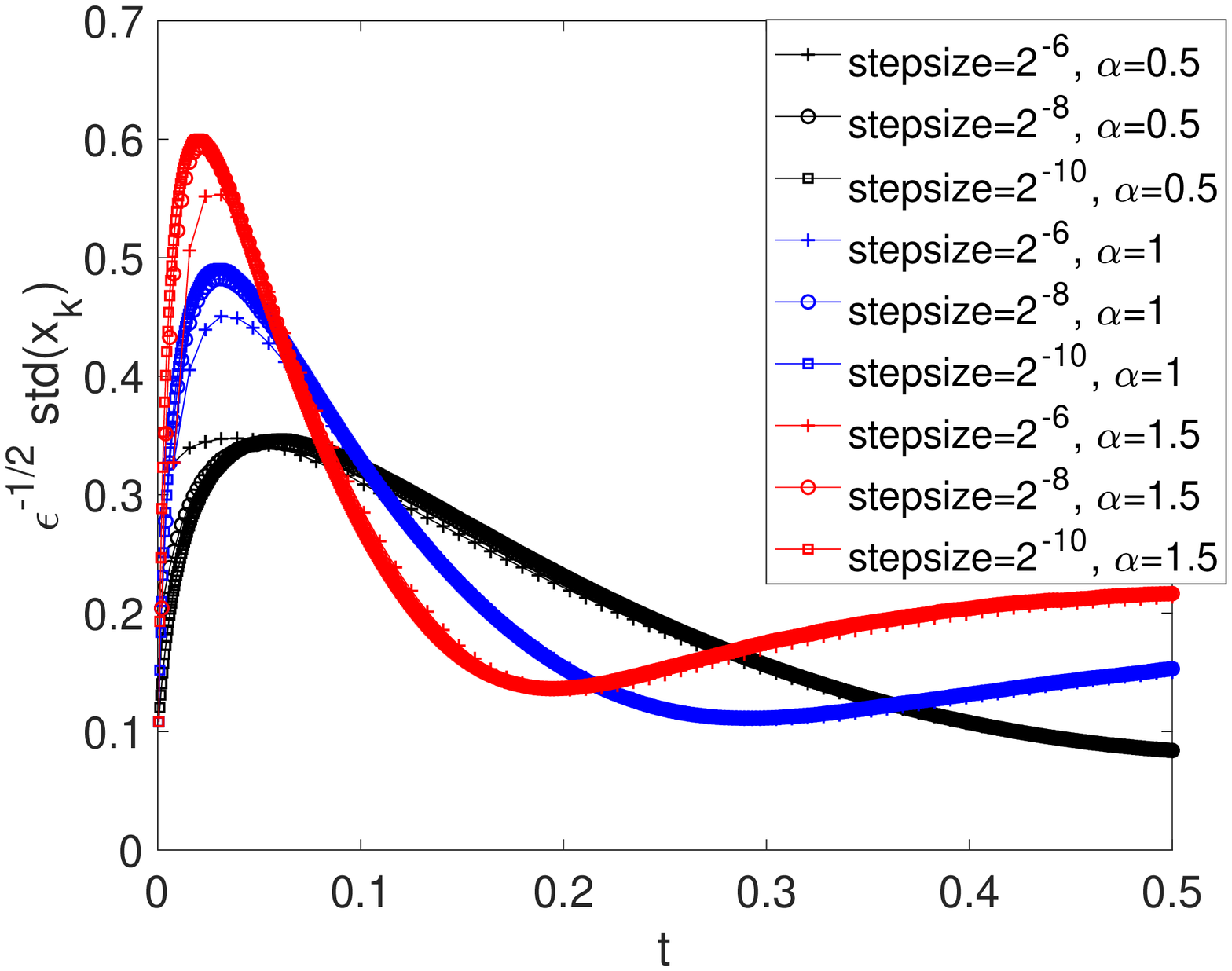}
\includegraphics[width=0.3\textwidth]{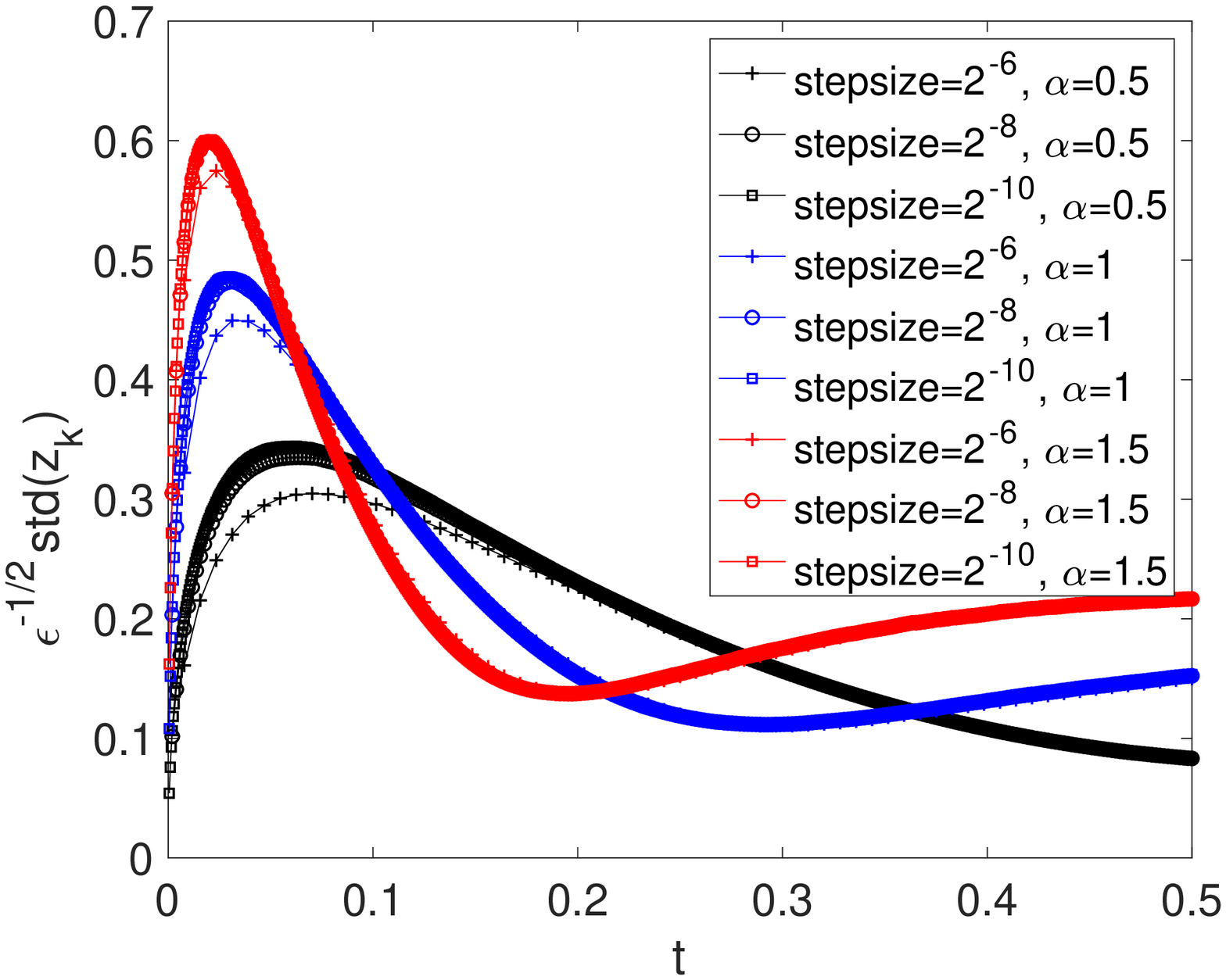}
\end{center}
\caption{std of $x_k$ and $z_k$}
\label{fig:toy-stdxz-g2}
\end{figure}
 For the residual, the theoretic prediction is that both 
 $\e r_k$ and $\std r_k$ are on the order $\eps^{-1}$ if $\alpha\neq 1$.
 We plot  $\eps^{-1} \e (r_k)$, $ \eps^{-1}\std(r_k)$,
against the time $t_k=k\eps$ in Figure \ref{fig:toy-meanrk} and Figure \ref{fig:toy-stdrk}, respectively.
For the stochastic ADMM scheme with $\alpha=1$, 
the numerical test shows that 
$\e r_k$ and $\std r_k$ are on the order $\eps^{-2}$.

\begin{figure}
\begin{center}
\includegraphics[width=0.3\textwidth]{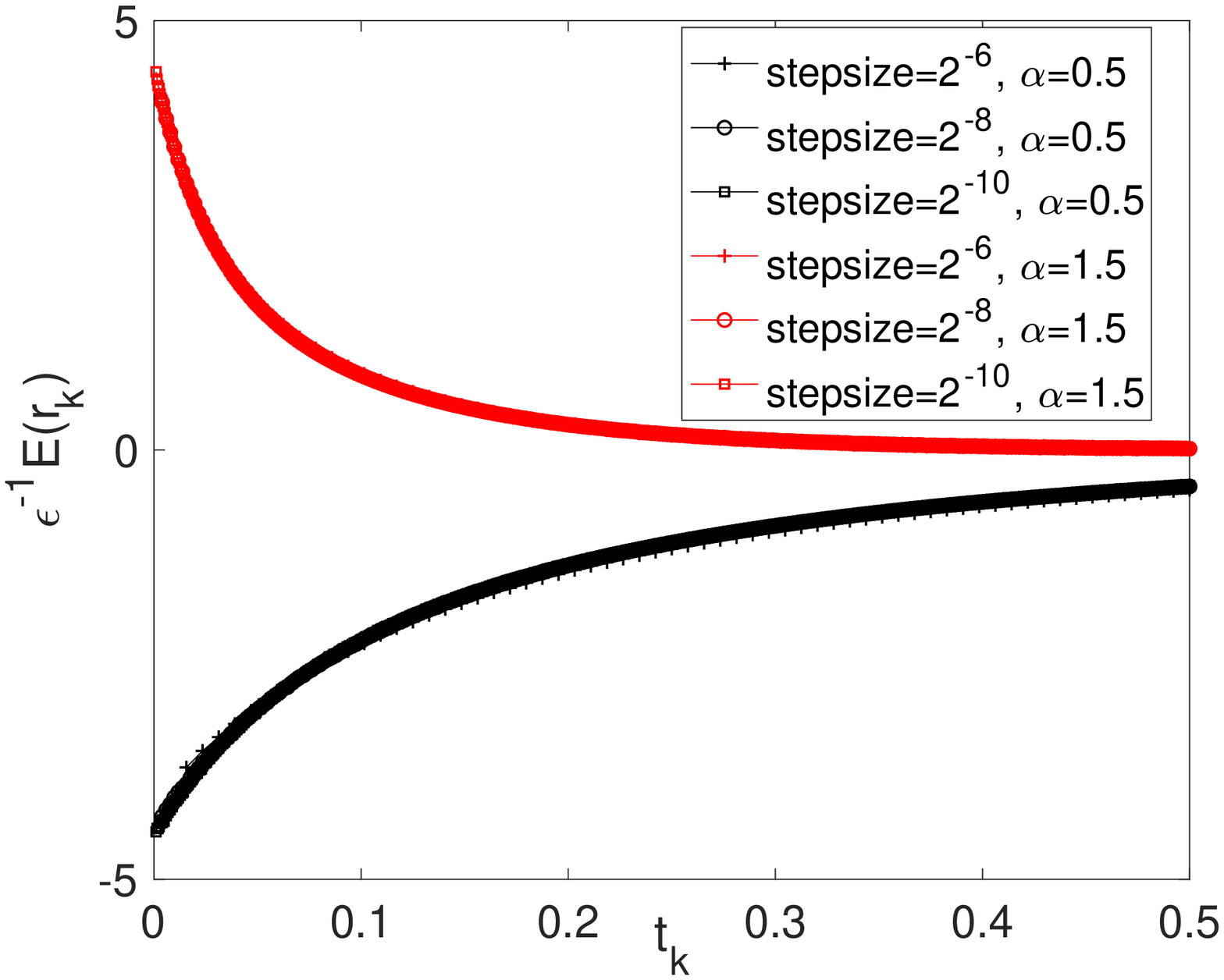}
\includegraphics[width=0.3\textwidth]{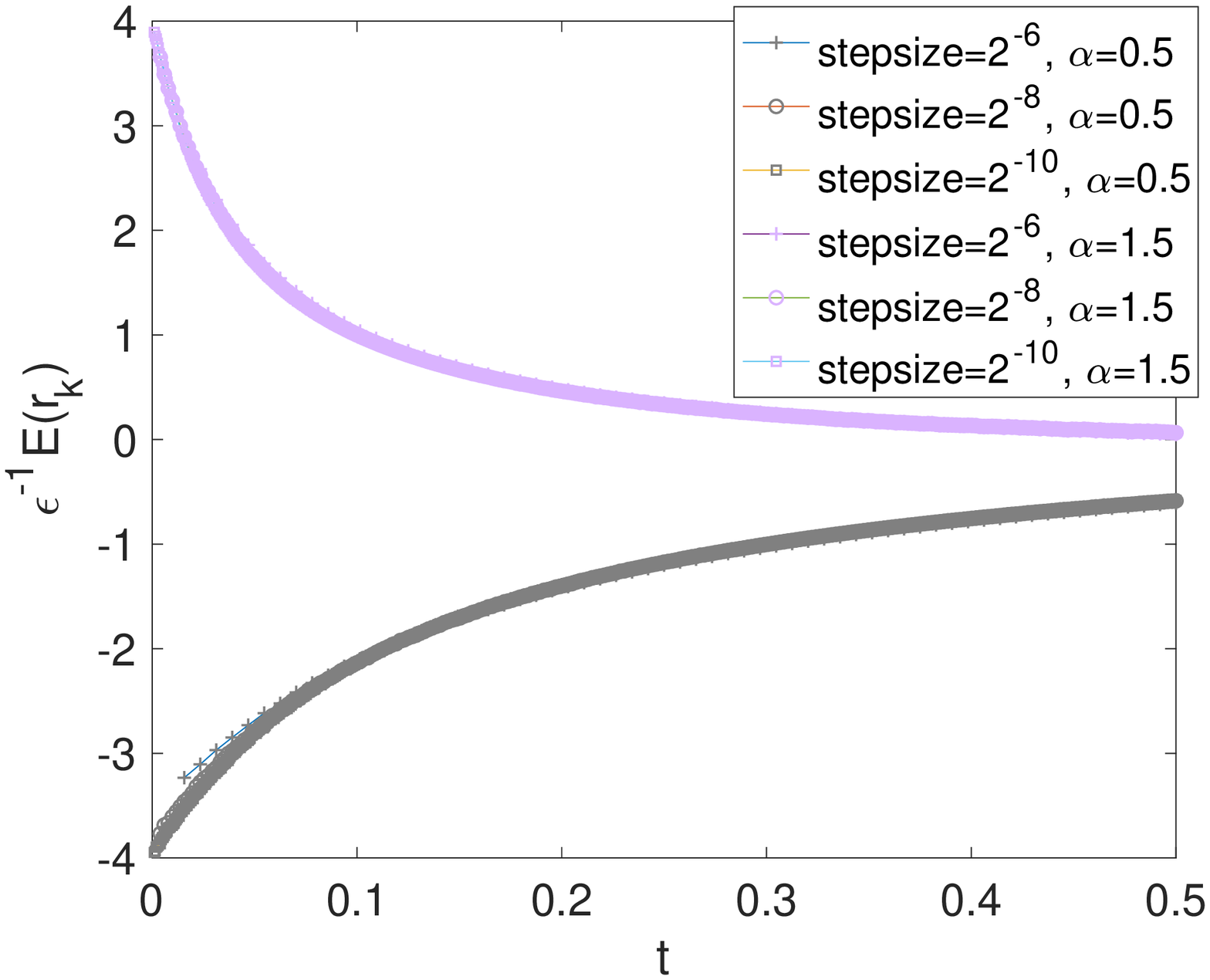}
\end{center}
\caption{The verification of the mean   residual $r_k =\mathcal{O}(\eps^{-1})$ for $\alpha\neq 1$.
$g(z)=z^2$ (top) and $g(z)=|z|$(bottom)}
\label{fig:toy-meanrk}
\end{figure}

\begin{figure}
\begin{center}
\includegraphics[width=0.3\textwidth]{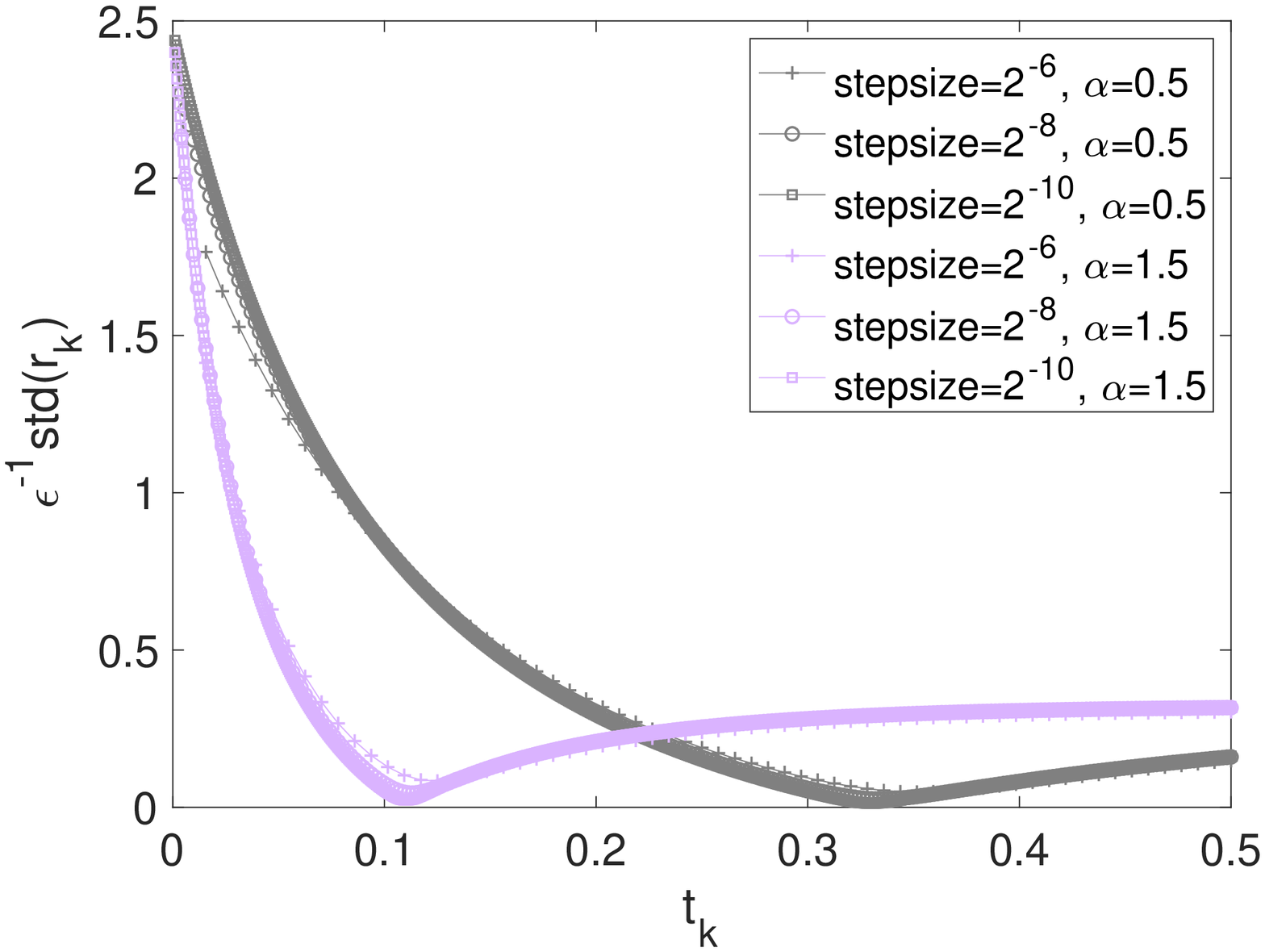}
\includegraphics[width=0.3\textwidth]{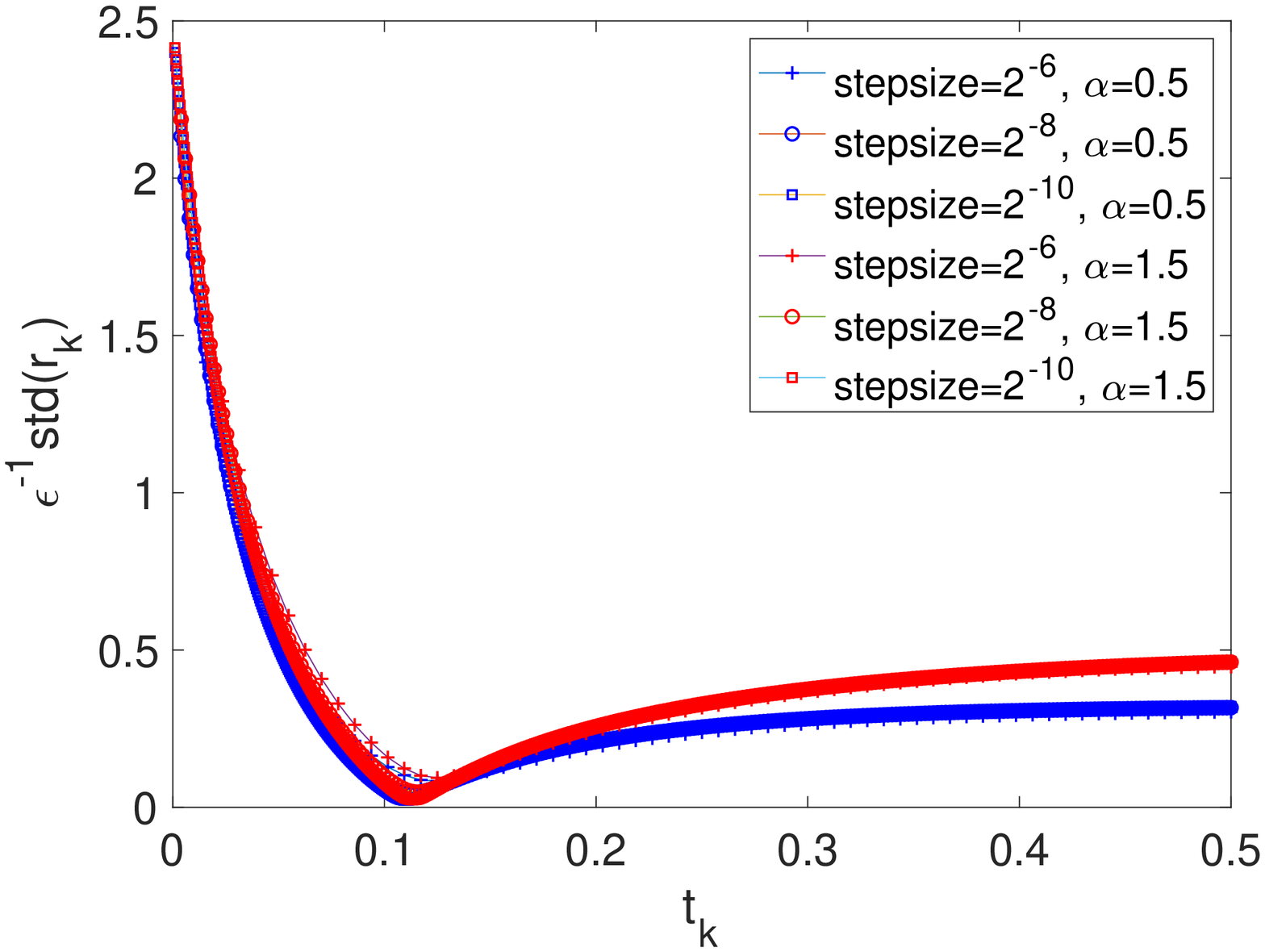}
\end{center}
\caption{The verification of the std of the residual $r_k \sim \eps^{-1}$ for $\alpha\neq 1$.
$g(z)=z^2$ (top) and $g(z)=|z|$(bottom).}
\label{fig:toy-stdrk}
\end{figure}

\begin{figure}
\begin{center}
\includegraphics[width=0.3\textwidth]{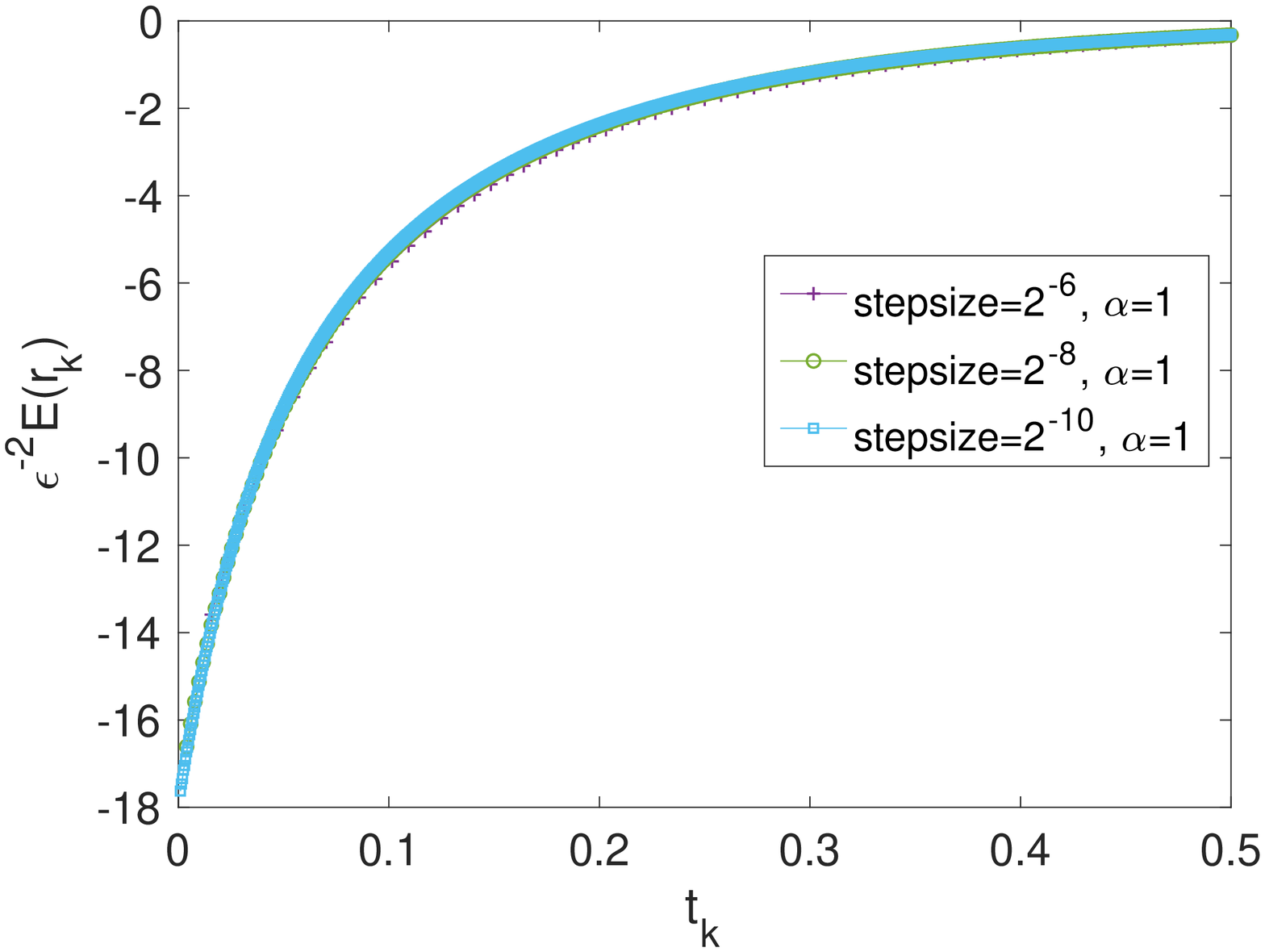}
\includegraphics[width=0.3\textwidth]{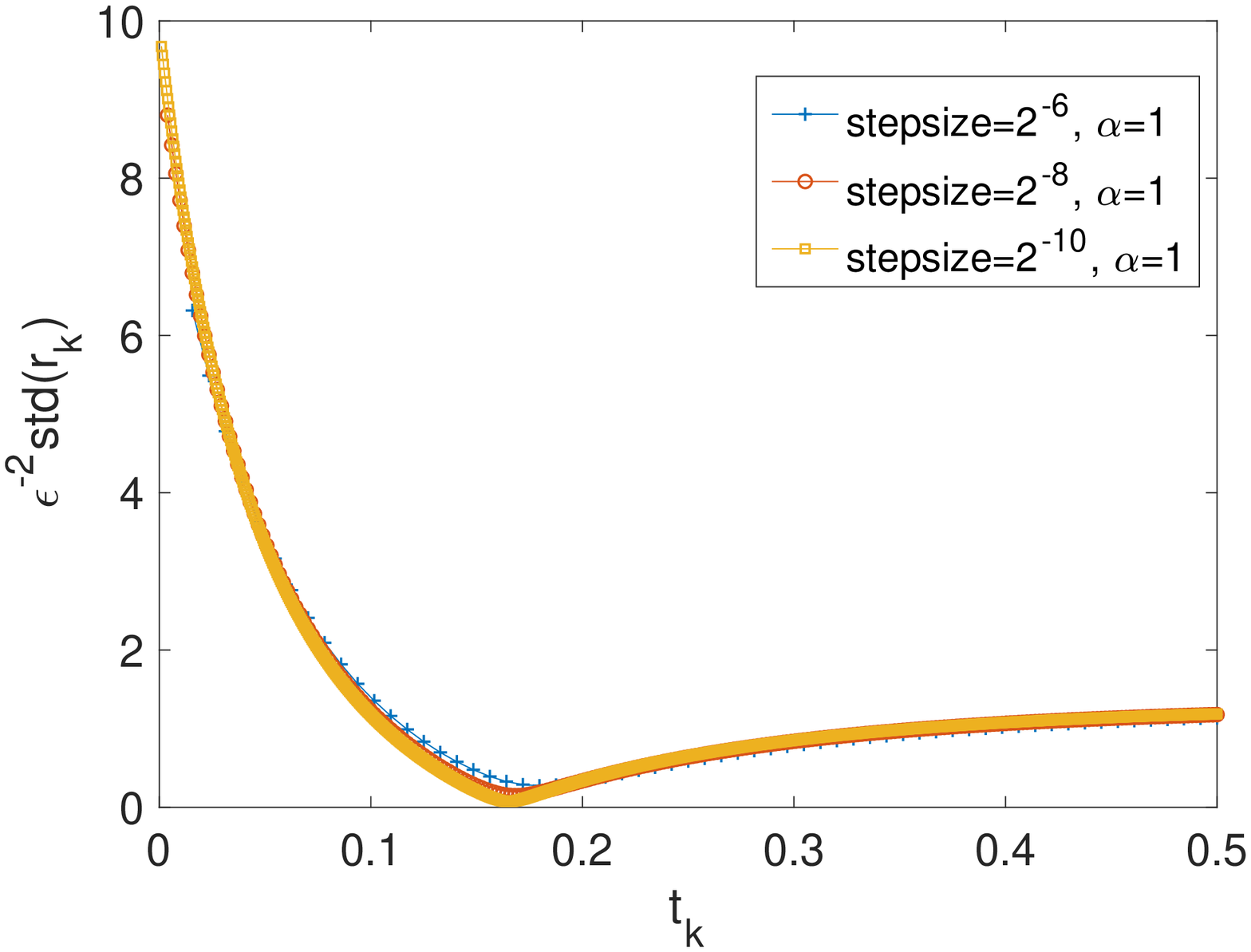}
\end{center}
\caption{The mean (top)  and std (bottom) of the residual $r_k \sim \eps^{-2}$ for  the scheme without relaxation $\alpha=1$.
$g(z)=z^2$.}
\label{fig:toy-stdrk}

\end{figure}

{\bf Example 2: generalized ridge and lasso regression}
We perform experiments on the generalized ridge regression.
\beq \begin{aligned}
&\underset{x\in \RR^d, z\in \RR^m}{\text{minimize}} && 
 \frac12 \e_{\boldsymbol \xi} \left( \xi_{in}^\top x - \xi_{obs} \right)^2 
+ g(z)
\\
&\text{subject to} &&\  Ax-z = 0.
\end{aligned}
\eq
where $g(z)= \frac12 \beta \norm{z}^2_2$ (ridge regression) or $g(z)=\beta \norm{z}_1$
(lasso regression), with  a constant $\beta>0$.
  $A$ is a penalty matrix specifying the desired structured   pattern of $x$.
 Among the random $\boldsymbol \xi=(\xi_{in}, \xi_{obs})\in \RR^{n+1}$,
$\xi_{in}  $ is the
zero-mean random (column) vector with  uniformly distribution  in the hypercube $  (-0.5,0.5)^d $
with independent components. 
The labelled data $\xi_{obs} := \xi_{in}^\top v + \zeta$, where $v\in \RR^n$ is a  given    vector
and $\zeta =  \mathcal{N}(0, \sigma_\zeta^2)$ is the zero-mean measurement noise, independent of $\xi_{in}$.
The analytic expression of the matrix-valued function $\Sigma(x)$ is available
based on the four-order momentums of   $\xi_{in}$.
%\begin{align*}
% \Sigma (x)=&
%\e \Big[ \left( (x-v)^\top \xi_{in} \right)^2 \xi_{in}\xi_{in}^\top \Big]
%\\ &  
%- \Omega (x-v)(x-v)^\top \Omega
%+\sigma_\zeta^2\  \Omega.
%\end{align*}

We use a batch size $B$ for the stochastic ADMM   ($B=9$ is used in experiments).
Then the corresponding SME  for the ridge regression problem
is 
$$\widehat{M}dX_t = - \Omega (X_t -v)dt -\beta A^\top A X_t \,  dt + \sqrt{\eps/B}\Sigma^{1/2}(X_t)dW_t  $$

The SME  for the lasso regression (formally) is 
$$\widehat{M}dX_t \in  - \Omega (x-v) dt-\frac12 \beta A^\top \mbox{sign}( A x)  dt + \sqrt{\eps/B}\Sigma^{1/2}dW_t  $$
The direct simulation of these stochastic equations has a high computational burden
because of the complexity of matrix square root for $\Sigma(x)$.
So, our tests are only restricted to   the dimension $d=3$. 

Set  $A$ is the  {Hilbert matrix} multiplied by $0.5$.
$\sigma_\zeta^2=0.1$. $\beta=0.2$.
The vector $v$ is set as $\mbox{linspace}(1,2,d)$.
 The initial $X_0=x_0$ is the zero vector.
$z_0=Ax_0$.

In algorithms, set $c=1$.
We choose the test function $\varphi(x)=\sum_{i=1}^d x_{(i)}$.
Denote $\varphi_k =\varphi(x_k)$ where $x_k$ are 
the sequence computed from the (unified) stochastic ADMM
with the batch size $B$ .
Denote  $\Phi_{k\eps}=\varphi(X_{k\eps})$ where $X_t$ is the solution of the SME.

  Let $\alpha= 1.5$, $\omega=1$, $\omega_1=1$.  $T=40$.
We first show in Figure \ref{fig:reg-mean}  the mean of $  \phi_k $ and $   \Phi_{k\eps}$
versus the time $t_k=k\eps$, for a fixed  $\eta=2^8$.
To test the match of the fluctuation, we plot  in 
Figure \ref{fig:ridge-std} the sequence 
$\eps^{-1/2}\std(\varphi_k)$ and 
$\eps^{-1/2}\std(\Phi_k)$
for three different values of $\eps = 2^{-m}T$ with $m=6,7,8$.

\begin{figure}
\begin{center}
\includegraphics[width=0.34\textwidth]{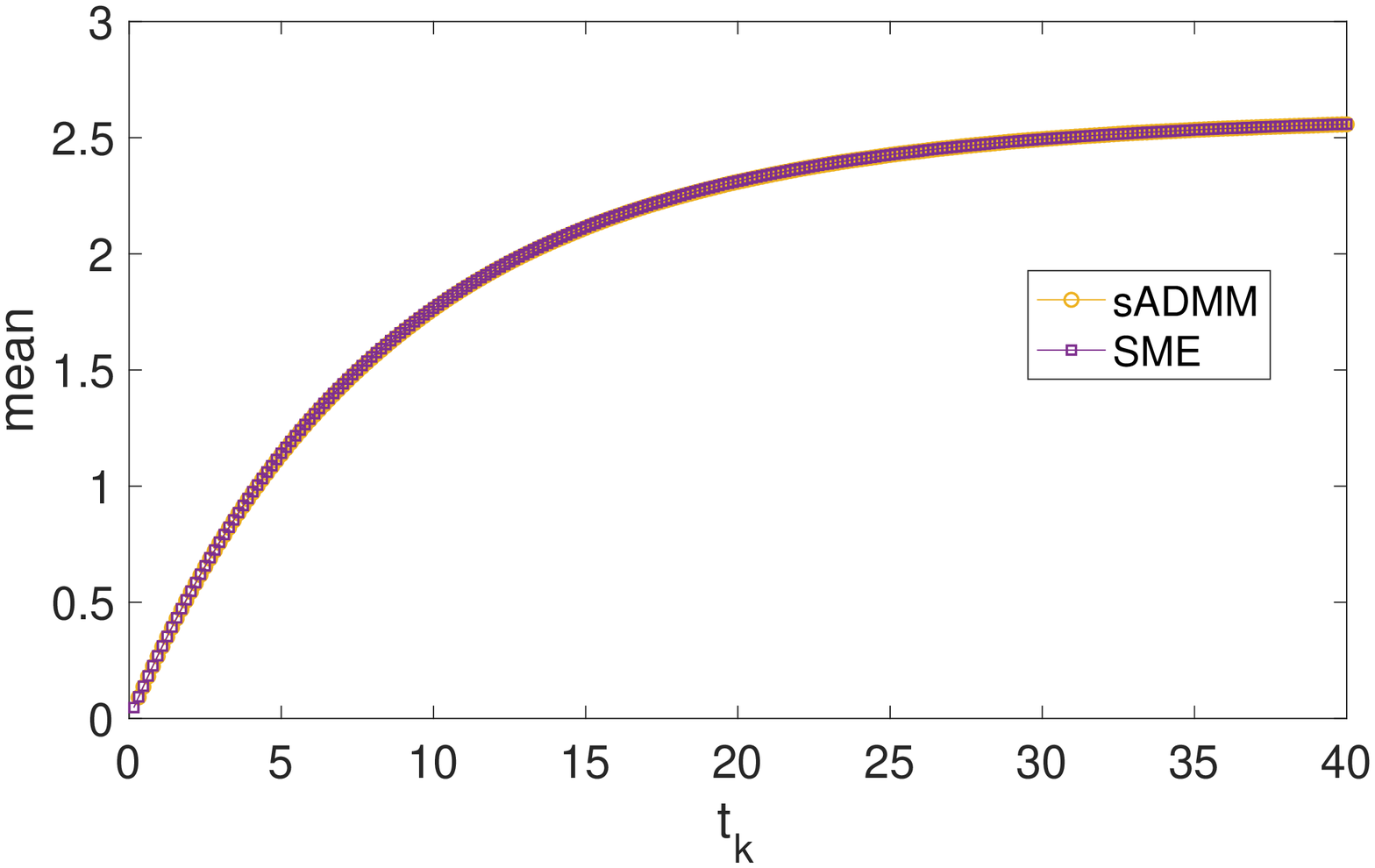}
\includegraphics[width=0.34\textwidth]{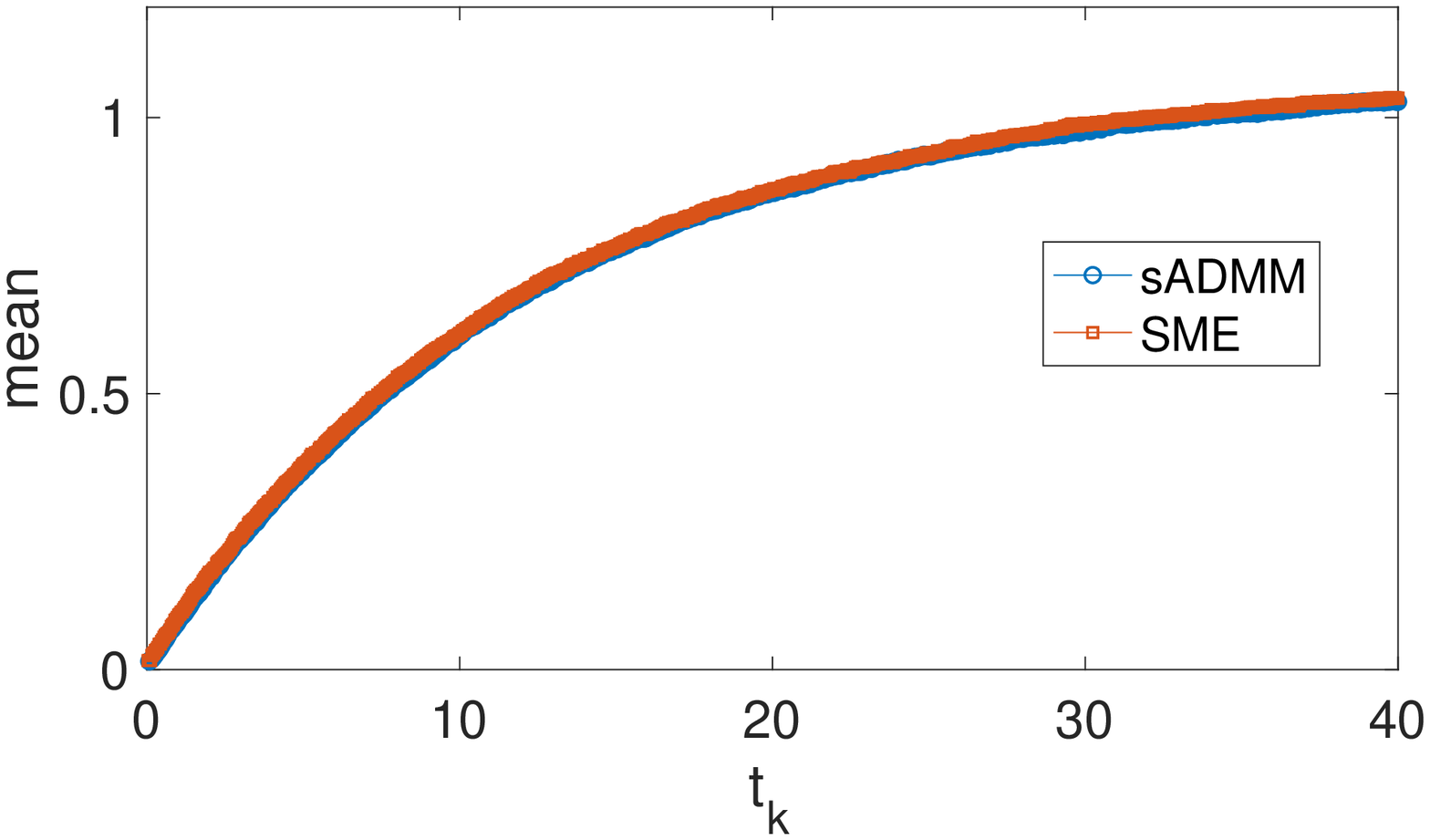}
\end{center}
\caption{The mean  of  $\varphi(x_k) $ from sADMM and $\varphi(X_{k\eps})$  from the SME.
top: $g(z)=\frac12 \beta\norm{z}^2_2$. bottom: $g(z)=\beta\norm{z}_1$.
The results are based on  100 independent runs.} 
\label{fig:reg-mean}
\end{figure}

\begin{figure}
\begin{center}
\includegraphics[width=0.34\textwidth]{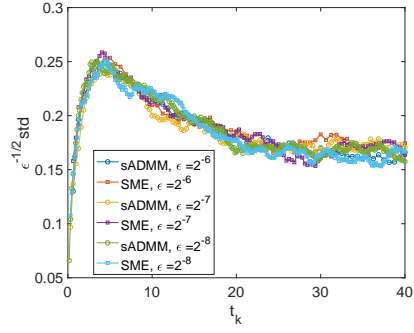}
 \end{center}
\caption{The rescaled std  of  $\varphi(x_k) $ from sADMM and $\varphi(X_{k\eps})$  from the SME.
  $g(z)=\frac12 \beta\norm{z}^2_2$.  
The results are based on  400 independent runs.} 
\label{fig:ridge-std}
\end{figure}
%
% \section{Summary and Future Work}
% In this paper, we have derived the stochastic modified equation(SME), in the   large $\rho$ limit (i.e., small 
% $\eps$ limit), as a first order 
% weak approximation to a general family of stochastic ADMM algorithms,
% including the relaxation ADMM where $\alpha\neq 1$.
% Our new stochastic continuous model is a substantial complementary to the existing 
%ODE-based continuous models \cite{francca2018admm,pmlr-v97-yuan19c} for the deterministic ADMM
%and it faithfully captures
%the fluctuation of the stochastic ADMM solution and improves  a more insightful  understanding of these stochastic ADMM algorithms.
%There are two directions worthy of  exploration   in future. 
%In the theoretic aspect, it would be interesting to extend our proof to  non-smooth functions $g$ by 
%stochastic differential inclusion technique and in the numerical aspect,
%it is interesting to use the SME for the adaptive choice of  $\rho$ and the batch size to 
%accelerate  the existing stochastic ADMM.

\section{Conclusion}
 In this paper, we have use the stochastic modified equation(SME) to analyze the dynamics of stochastic ADMM in the   large $\rho$ limit (i.e., small 
 step-size $\eps$ limit). It is a first order 
 weak approximation to a general family of stochastic ADMM algorithms,
 including the standard, linearized and gradient-based ADMM with relaxation $\alpha\neq 1$.
 
 Our new continuous-time analysis is the first analysis of stochastic version of ADMM. 
It faithfully captures
the fluctuation of the stochastic ADMM solution and provides  a mathematical clear and insightful way to  understand the dynamics of stochastic ADMM algorithms.
 
 It is a substantial complementary to the existing 
ODE-based continuous-time analysis \cite{francca2018admm,pmlr-v97-yuan19c} for the deterministic ADMM. It is also an important mile-stone for understanding continuous time limit of stochastic algorithms other than stochastic gradient descent (SGD), as we observed new phenonmons like the joint fluctuation of $x$, $z$ and $r$.
 We provide solid numerical experiments verifying our theory on several examples, including smooth function like quadratic functions and non-smooth function like $\ell_1$ norm.

\section{Future Work}
There are a few natural directions to further explore in future. 

First, in the theoretic analysis aspect, for simplicity of analysis, we derive our mathematical proof based on smoothness of $f$ and $g$. As we observed empirically, for non-smooth function like $\ell_1$ norm, our continuous-time limit framework would derive a stochastic differential inclusion. A natural follow-up of this work would be develop formal mathematical tools of stochastic differential inclusion to extend our proof to  non-smooth functions.

Second, from our stochastic differential equation, we could develop practical rules to choose adaptive step-size $\epsilon$ and batch size by  precisely computing the optimal diffusion-fluctuation trade-off to 
accelerate  convergence of stochastic ADMM. 
\bibliographystyle{icml2020}

%\bibliography{SmileSGD}

\onecolumn
\pagebreak
\appendix

\bigskip
 
\begin{center}
{\Large{\bf Appendix: Stochastic Modified Equations for Continuous Limit of Stochastic ADMM}
}
\end{center}

\bigskip
 
 \section{Weak Approximation and Stochastic Modified Equations}
\label{app:A}
We introduce and review the concepts  for the weak  approximation and the stochastic modified equation.
\begin{definition}[weak convergence]
We say the family (parametrized by $\eps$) of the stochastic sequence $\{x^\eps_k: k\geq 1 \}$, $\eps>0$, weakly converges to
(or is a weak approximation to),  
a family of   continuous-time Ito processes  $\{X^\eps_t: t\in \RR^+\}$ with the  order $p$
if they satisfy the following conditions:
For any time interval $T>0$ and for any test function $\varphi$ 
such that $\varphi$ and its partial derivatives up to order $2p+2$ belong to 
$\mathcal{F}$,
there exists a constant $C>0$ and $\eps_0>0$ such that  for any $\eps < \eps_0$,
\begin{equation}
\max_{1\leq k\leq  \lfloor
T/\eps\rfloor  
}\left | \e \varphi(X^\eps_{k\eps})-\e \varphi(x^\eps_{k})
\right| \leq C \eps^p,
 \end{equation}
\end{definition}
The constant $C$ in the above inequality  and $\eps_0$, independent of $\eps$, may depend on $T$ and $\varphi$.
For the conventional applications to  numerical method for SDE
\cite{Milstein1995book}, $X^\eps$ may not depended on $\eps$;
for the stochastic modified equation in our problem, $X^\eps$ does depend on $\eps$. 
We drop  the subscript $\eps$  in $ x^\eps_k $ and  $X^\eps_t$ 
for notational ease whenever there is no ambiguity.

The idea of using the weak approximation and the stochastic modified equation 
was originally proposed by  \cite{LTE2017SME}, which
is based on an important  theorem    due to  \cite{Milstein1986}. In brief, this Milstein's theorem
links the one step difference, which has been detailed above,
to the global approximation in weak sense, by checking three  conditions on 
the momentums of one step difference. 
Since we only consider  the first order weak approximation, 
the Milstein's theorem is  introduced
in a simplified form below for only $p=1$.
The more general situations can be found in Theorem 5 in \cite{Milstein1986},
Theorem 9.1 in \cite{Milstein1995book} and  Theorem 14.5.2 in \cite{KPSDEbook2011}.

Let  the stochastic sequence $\{x_k\}$ be  recursively defined by the  iteration  
 written in the form associated with a function $\mathcal{A}(\cdot,\cdot, \cdot)$:
\begin{equation}\label{eq:AAite}
x_{k+1}=x_k- \eps \mathcal{A}(\eps, x_k,\xi_{k+1}), ~~k\geq 0
\end{equation}
where   $\{\xi_k: k\geq 1\}$ are iid random variables.
$x_0=x\in\RR^d$.
Define the one step difference  $\bar{\Delta} = x_1 -x$.
We use the  parenthetical subscript to   
denote the dimensional components of a vector like
 $\bar{\Delta} =(\bar{\Delta}_{(i)}, 1\leq i\leq d )$.
 
Assume that there exists a function $K_1(x)\in \mathcal{F}$ 
such that $\bar{\Delta}$ satisfies the bounds  of the fourth momentum
\begin{equation}\label{con:m4}
\abs{ \e ( \bar{\Delta}_{(i)}\bar{\Delta}_{(j)} \bar{\Delta}_{(m)}\bar{\Delta}_{(l)})  } \leq K_1(x) \eps^3
\end{equation}
  for any component indices $i,j,m,l \in \{ 1,2,\ldots, d\}$ and 
 any $x\in \RR^d$,

For any arbitrary  $\eps>0$, consider  the family of the Ito processes  $X^\eps_t$ defined by a  
stochastic differential equation whose noise depends on the parameter $\eps$,
\begin{equation} 
\label{eqn:SDE}
dX_t = b(X_t)dt + \sqrt{\eps} \sigma(X_t)dW_t,
\end{equation}
$W_t$ is the standard Wiener process in $\mathbb{R}^d$.
The initial  is $X_0=x_0=x$.
The coefficient functions $b$ and $\sigma$   satisfy certain standard conditions;
see \cite{Milstein1995book}.
Define the one step difference  $ {\Delta} = X_{\eps} -x$ for the SDE \eqref{eqn:SDE}.

\begin{theorem}[Milstein's weak convergence theorem]
\label{thm:Milstein}
If  there exist a constant $K_0$  
   and a function $K_2(x) \in \mathcal{F}$ ,   such that  the following   conditions  
   of the first three moments on the error $\Delta - \bar{\Delta} $:
 \begin{subequations}
 \label{con:mm}
\begin{align}\label{con:m1}
&\abs{ \e(X_\eps - X_1)  } \leq K_0 \eps^2
\\
\label{con:m2}
&\abs{ \e  (\Delta_{(i)}  \Delta_{(j)})  - \e (\bar{\Delta}_{(i)} \bar{\Delta}_{(j)})  }  \leq K_1(x) \eps^2
\\
\label{con:m3}
&\abs{ \e ( \Delta_{(i)} \Delta_{(j)} \Delta_{(l)})- \e ( \bar{\Delta}_{(i)} \bar{\Delta}_{(j)}\bar{ \Delta}_{(l)})  } \leq K_1(x) \eps^2
\end{align}
 \end{subequations}
 hold for any   $i,j,l\in \{ 12,\ldots, d\}$ and 
 any $x\in \RR^d$,
 then 
 $\{x_k\}$ weakly converges to $\{X_t\}$ with the order 1.
\end{theorem}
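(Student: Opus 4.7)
The plan is to carry out the classical backward-Kolmogorov / semigroup proof of weak convergence. First, introduce the conditional-expectation function $u(s,x) := \e[\varphi(X_T^{s,x})]$, where $X_t^{s,x}$ denotes the solution of \eqref{eqn:SDE} started from $x$ at time $s$; this function solves $\partial_s u + \mathcal{L}^\eps u = 0$ on $[0,T]$ with terminal condition $u(T,x)=\varphi(x)$ and generator $\mathcal{L}^\eps = b\cdot\nabla + \tfrac{\eps}{2}\sigma\sigma^\top:\nabla^2$. Under the smoothness and polynomial-growth hypotheses on $b,\sigma,\varphi$ (this is precisely why ``$\varphi$ and its partials up to order $2p+2=4$ in $\mathcal{F}$'' appears in the definition of weak convergence), one shows by a standard variational-SDE argument that $u(s,\cdot)\in C^4$ and that $u$ together with its partials up to order $4$ lies in $\mathcal{F}$ uniformly in $s\in[0,T]$. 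This regularity is the main analytic ingredient.

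Next, telescope. Taking $T=K\eps$ without loss of generality and setting $t_k=k\eps$, write
\begin{equation}
\e\varphi(x_K)-\e\varphi(X_T)
\;=\;
\e u(T,x_K)-u(0,x_0)
\;=\;
\sum_{k=0}^{K-1}\e\!\left[u(t_{k+1},x_{k+1})-u(t_k,x_k)\right].
\end{equation}
The semigroup identity $u(t_k,x_k)=\e[u(t_{k+1},X_{t_{k+1}}^{t_k,x_k})\mid x_k]$ rewrites each summand as
\begin{equation}
\e\!\left[u(t_{k+1},x_k+\bar\Delta_k)-u(t_{k+1},x_k+\Delta_k)\right],
\end{equation}
where $\bar\Delta_k=x_{k+1}-x_k$ and $\Delta_k=X_{t_{k+1}}^{t_k,x_k}-x_k$ are the one-step scheme and SDE increments, both conditional on $x_k$.

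The one-step error is then handled by a fourth-order Taylor expansion of $y\mapsto u(t_{k+1},x_k+y)$ around $y=0$, followed by differencing after taking conditional expectation. The zeroth-order terms cancel, and the first-, second-, and third-order contributions are exactly linear combinations of $\e(\Delta_k-\bar\Delta_k)$, $\e(\Delta_{(i)}\Delta_{(j)})-\e(\bar\Delta_{(i)}\bar\Delta_{(j)})$, and the analogous third moment, each weighted by a partial derivative of $u$ at $x_k$ which lies in $\mathcal{F}$; by hypotheses \eqref{con:m1}--\eqref{con:m3} each is bounded by $\eps^2$ times a polynomial-growth function of $x_k$. The Taylor remainder is a fourth-order tensor contraction, controlled on the scheme side by \eqref{con:m4} and on the SDE side by the classical Ito/BDG estimate $\e\|\Delta_k\|^4\leq C(1+\|x_k\|^{\kappa})\eps^2$, once more yielding an $\mathcal{O}(\eps^2)$ contribution with $\mathcal{F}$-valued prefactor.

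Finally, take outer expectation over $x_k$, using a uniform-in-$(k,\eps)$ moment bound $\sup_{k\leq K}\e\|x_k\|^{\kappa'}\leq C_T$ (standard under the linear-growth condition on $\mathcal{A}$ together with polynomial-moment bounds on $\xi_k$) to absorb all $\mathcal{F}$-prefactors; this gives $\bigl|\e[u(t_{k+1},x_{k+1})-u(t_k,x_k)]\bigr|\leq C\eps^2$ per step, and summing the $K=T/\eps$ terms yields the global weak error $\mathcal{O}(\eps)$. The uniform-in-$k$ statement with $k\leq\lfloor T/\eps\rfloor$ is obtained by rerunning the argument with terminal time $t_k$ in place of $T$. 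The chief obstacle is the very first step: establishing $C^4$ regularity together with uniform polynomial-growth bounds for $u$ and its derivatives on $[0,T]\times\RR^d$. This reduces to estimating the tangent SDEs obtained by differentiating \eqref{eqn:SDE} in its initial condition up to order four, which is where the smoothness and polynomial-growth parts of Assumptions II are decisively used; it is the technically heaviest piece of the proof, though standard and available from Kloeden--Platen or Milstein's monograph.
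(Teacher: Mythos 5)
The paper does not actually prove this theorem: it is imported verbatim as background from Milstein (1986, Theorem 5), Milstein (1995, Theorem 9.1) and Kloeden--Platen (Theorem 14.5.2), and your proposal is precisely the classical proof given in those references --- backward-Kolmogorov regularity of $u(s,x)=\e\varphi(X_T^{s,x})$, telescoping over steps, a fourth-order Taylor comparison of the one-step increments $\Delta$ and $\bar{\Delta}$ controlled by the moment conditions \eqref{con:mm} and \eqref{con:m4}, and uniform polynomial moment bounds on $x_k$ to close the sum. The argument is correct and complete at the level of detail expected here (modulo the paper's own typos in \eqref{con:m1}, where $X_\eps - X_1$ should read $\Delta-\bar{\Delta}$, and $K_2$ versus $K_1$).
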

 In light of the above theorem, we will now call equation  \eqref{eqn:SDE}
  the   {\it stochastic modified equation} (SME) of the iterative scheme \eqref{eq:AAite}.

For the SDE \eqref{eqn:SDE} at the small noise $\eps$,  by the Ito-Taylor expansion,  it is well-known that $\e  {\Delta}  = b  (x) \eps +  \mathcal{O}(\eps^2)$
and $\e [  {\Delta}  {\Delta} ^\top ]=  \left(b(x)b(x)^\top + \sigma(x)\sigma(x)^\top\right)\eps^2 + \mathcal{O}(\eps^3)$
and  $   \e ( \Pi_{m=1}^s {\Delta}_{(i_m)} )   =\mathcal{O}(\eps^3)$ for all integer $s\geq 3$ and the component index $i_m=1,\ldots,d$.
Refer to \cite{KPSDEbook2011} and  Lemma 1  in \cite{LTE2017SME}.
So, the main receipt to  apply the Milstein's theorem is to examine the conditions of the momentums
for the discrete sequence $\bar{\Delta} = x_1-x_0 $.

One prominent   work   \cite{LTE2017SME} is to use   the SME 
as a weak approximation  to understand the dynamical behaviour of the stochastic gradient descent (SGD). 
The prominent advantage of this technique is that
the fluctuation in the SGD iteration can be well captured by the fluctuation in the SME.
Here is the brief result.  For the composite minimization problem $$\min_{x\in \RR} f(x)=\e_\xi f(x,\xi),$$
  the SGD iteration is $x_{k+1}=x_k - \eps f'(x_k,\xi_{k+1})$ with the step size $\eps$,
   then by Theorem \ref{thm:Milstein}, the corresponding SME  of first order approximation is
  \begin{equation}
  \label{eq:sde}
  dX_t = -  f'(x) dt + \sqrt{\eps}\sigma(x) dW_t
  \end{equation} 
with $\sigma(x)=\mbox{std}_\xi(f'(x,\xi))=(\e[ (f'(x)-f'(x,\xi))^2 ])^{1/2}$. Details can be found in  \cite{LTE2017SME}.
The SGD here is analogous to the  forward-time Euler-Maruyama approximation
since $\mathcal{A}(\eps,x,\xi)=f'(x,\xi)$.
%If one uses the backward-time   approximation, such as 
%$x_{k+1}=x_k - \eps f'(x_{k+1},\xi_{k+1})$, then 
%$\mathcal{A}(\eps, x_k,\xi_{k+1})$ does not have an explicit formula in general, 
%but an asymptotic result is possible when $f'(x,\xi)$ is twice differentiable:
%$x_{k+1}-x_k = -\eps (1-\eps f''(x_k,\xi_{k+1}))f'(x_{k},\xi_{k+1})+  \mathcal{O}(\eps^3)$,
%and it follows that $\mathcal{A}(\eps,x,\xi)= (1-\eps f''(x,\xi) )f'(x,\xi)+  \mathcal{O}(\eps^2)$.
%By this example, 
%we see that  allowing       $\mathcal{A}$ to  depend on $\eps$ in  the iteration \eqref{eq:AAite}, we 
%are able to   handle more general iterative scheme like   backward Euler method.
%  One can directly verify that the SME \eqref{eq:sde} is the same weak approximation
%to both the forward and backward SGD iterations. The reader may refer to 
%Section 10 of \cite{Milstein1995book} for analysis of implicit scheme.

\section{Proof of main theorems}
\label{sec:pfthm}
\label{sec2}
The one step difference is important to consider the weak convergence of the discrete scheme 
\eqref{GSADMM}.  The  question is that for one single iteration, from step $k$ to  step $k+1$, what is the order of  the {\it change}  of the states $(x,z,u)$. Since For notational ease, we   drop the random variable 
$\xi_{k+1}$  in the scheme \eqref{GSADMM}; the readers bear in mind that $f$ and its derivatives involve $\xi$.

We work on the general ADMM scheme \eqref{GSADMM}.
 The optimality conditions for the scheme \eqref{GSADMM} are
\begin{subequations} \label{GADMM}
\begin{align}
& \omega_1 \eps   f'(x_k) +
 (1-\omega_1)\eps f'(x_{k+1})
 +\eps A^\top \lambda_k
\notag
 \\
&\qquad +    A^\top
 \left ( \omega  Ax_{k} + (1-\omega )A x_{k+1}  - z_k  \right)
   + c(x_{k+1}-x_k) =0
   \label{eq:opt-x--update}
\\
 \label{eq:opt-z--update}
 &
 \eps g'(z_{k+1}) =
  \eps \lambda_k +     \alpha A x_{k+1} +(1-\alpha)z_k
- z_{k+1}
 \\
 &
 \eps \lambda_{k+1}= \eps \lambda_{k} +   \alpha    A  x_{k+1}+(1- \alpha) z_k{} -  z_{k+1}
  \label{eq:opt-lambda--update}
 \end{align}
 \end{subequations}

%Given any triplet $(x_k, z_k,\lambda_k)$ as the input,
% the above {\it one  step iteration} 
%provides an output $(x_{k+1},z_{k+1}, \lambda_{k+1})$
%as the unique solution to the nonlinear system \eqref{GADMM}.
%
Note that due to \eqref{eq:opt-z--update} and \eqref{eq:opt-lambda--update}, the last condition \eqref{eq:opt-lambda--update} can be   
replaced 
by $
\lambda_{k+1} = g'(z_{k+1}).$
So, without loss of generality, one can assume that 
\begin{equation}
\lambda_{k'} \equiv g'(z_{k'})\end{equation}
 for any integer $k'\geq 1$.
The optimality conditions \eqref{GADMM} now can be written only 
in the variables $(x,z)$:
\begin{subequations} \label{GADMM2}
\begin{align}
& \omega_1 \eps   f'(x_k) +
 (1-\omega_1)\eps f'(x_{k+1})
 +\eps A^\top g'(y_k)
\notag
 \\
& \qquad +    A^\top
 \left ( \omega  Ax_{k} + (1-\omega )A x_{k+1}  - z_k  \right)
   + c(x_{k+1}-x_k) =0
   \label{eq:opt-x-update}
\\
 \label{eq:opt-z-update}
 &
 \eps g'(z_{k+1}) - \eps g'(z_k) =    \alpha A x_{k+1} +(1-\alpha)z_k
- z_{k+1}
  \end{align}
 \end{subequations}
 
As $\eps\to 0$, we seek the asymptotic expansion of $x_{k+1}-x_{k} $
from \eqref{eq:opt-x-update} and the asymptotic expansion of $z_{k+1}-z_{k} $
from  \eqref{eq:opt-z-update}.
The first result is that
\begin{subequations}
\label{eq:xzO1}
\begin{align}
 x_{k+1}-x_{k} &= -M^{-1}  A^\top r_k + c_k \eps,
\label{eq:xO1}
\\ 
 z_{k+1}-z_{k}& = \alpha ( I -   A M^{-1}  A^\top ) r_k +  c'_k \eps,
\label{eq:yO1}
\end{align}
 \end{subequations}
where $r_k$ is the  {\it residual}
\begin{equation}
r_k:= Ax_k-z_k
\end{equation}
and 
the matrix $M$ is \begin{equation}
\label{def:M}
M=M_{c,\omega}:=c+(1-\omega)A^\top A.
\end{equation}
The constant $c_k$ and $c'_k$ are independent of $\eps$ but related to $f'$, $g'$ and other parameters $\alpha,\omega,\omega_1$.
Throughout the rest of the paper,  we shall use the notation $\mathcal{O}(\eps^p)$ to denote the terms $c_k \eps^p$, for $p=1,2,\ldots$.
Given any input $(x_{k},z_{k})$,  since $r_k=Ax_k-z_k$ may not be zero,
then as the step size $\eps \to 0$,  \eqref{eq:xO1} and \eqref{eq:xO1} show that   $(x_{k+1},z_{k+1})$ does not 
converge to $(x_{k},z_{k})$.
However  we can  show that the residual after one step iteration 
$r_{k+1}$ is always a small number on the order $\mathcal{O}(\eps)$, so that the consist
condition that as $\eps\to0$, $(x_{k+1}, z_{k+1})$ tends to $(x_k,z_k)$ holds.

\begin{proposition} 
\label{p:rpp}
We have the following property for the propagation of the residual: 
\begin{equation}
\label{eq:r-update}
  r_{k+1}
= \left( 1-\alpha \right) ( I -   A M^{-1}  A^\top ) r_k  + \mathcal{O}(\eps).
   \end{equation}
\end{proposition}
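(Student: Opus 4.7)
The plan is a direct algebraic substitution using the two first-order expansions \eqref{eq:xO1}--\eqref{eq:yO1} that have just been established from the optimality conditions \eqref{GADMM2}. The key identity to exploit is the tautology
\begin{equation*}
r_{k+1} \;=\; A x_{k+1} - z_{k+1} \;=\; (Ax_k - z_k) + A(x_{k+1}-x_k) - (z_{k+1}-z_k) \;=\; r_k + A(x_{k+1}-x_k) - (z_{k+1}-z_k),
\end{equation*}
which reduces the question to the two one-step increments already analysed.

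First, I would plug in \eqref{eq:xO1}, giving $A(x_{k+1}-x_k) = -AM^{-1}A^\top r_k + \mathcal{O}(\eps)$, and \eqref{eq:yO1}, giving $z_{k+1}-z_k = \alpha(I - AM^{-1}A^\top)r_k + \mathcal{O}(\eps)$. Collecting terms,
\begin{equation*}
r_{k+1} \;=\; r_k - AM^{-1}A^\top r_k - \alpha r_k + \alpha AM^{-1}A^\top r_k + \mathcal{O}(\eps) \;=\; (1-\alpha)\bigl(I - AM^{-1}A^\top\bigr) r_k + \mathcal{O}(\eps),
\end{equation*}
which is exactly \eqref{eq:r-update}. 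The algebra really is this short; all the substance has been packaged into \eqref{eq:xzO1}.

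The only point that requires a moment of care is the meaning of the residual $\mathcal{O}(\eps)$: by the conventions set up just before the proposition, these are terms of the form $c_k \eps$ whose prefactors depend polynomially on $f'(x_k)$, $g'(z_k)$, $f'(x_{k+1})$ and the Hessians, so that Assumption~II (in particular uniform boundedness of $f''$, $g''$ and the polynomial growth condition) guarantees the bounds are genuinely $\mathcal{O}(\eps)$ and not merely formal. The main obstacle, then, is not this proposition itself but the preceding derivation of \eqref{eq:xzO1} from \eqref{GADMM2}, where one must carefully isolate the $\mathcal{O}(1)$ part of \eqref{eq:opt-x-update}, use the rewriting $\omega Ax_k + (1-\omega)Ax_{k+1} - z_k = r_k + (1-\omega) A(x_{k+1}-x_k)$ to expose the matrix $M = c + (1-\omega)A^\top A$, and invert $M$ (well defined since $A$ has full column rank and $c,\omega$ satisfy the standing assumptions). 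Once \eqref{eq:xzO1} is in hand, \eqref{eq:r-update} is a two-line consequence, and the contraction-like factor $(1-\alpha)(I - AM^{-1}A^\top)$ is exactly what will later make the residual small in the cases $\alpha = 1$ versus $\alpha \neq 1$ discussed in the main theorems.
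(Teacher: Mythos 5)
Your argument is correct, and it reaches \eqref{eq:r-update} by a slightly different decomposition than the paper's. You start from the tautology $r_{k+1}=r_k+A(x_{k+1}-x_k)-(z_{k+1}-z_k)$ and substitute \emph{both} one-step expansions \eqref{eq:xO1} and \eqref{eq:yO1}; the algebra $(I-AM^{-1}A^\top)r_k-\alpha(I-AM^{-1}A^\top)r_k=(1-\alpha)(I-AM^{-1}A^\top)r_k$ checks out. The paper instead rearranges the exact $z$-optimality condition \eqref{eq:opt-z-update} into $r_{k+1}=\bigl(\tfrac{1}{\alpha}-1\bigr)(z_{k+1}-z_k)+\tfrac{\eps}{\alpha}\bigl(g'(z_{k+1})-g'(z_k)\bigr)$, so that only the $z$-increment expansion \eqref{eq:yO1} is needed and the $g'$ difference is manifestly an exact $\mathcal{O}(\eps)$ remainder. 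The two routes cost the same here, but the paper's identity is the one that gets reused later (it is exactly what motivates the $\alpha$-residual and Corollary \ref{cor:rk}), whereas your telescoping identity reappears in the proof of Proposition \ref{p:rp2}. You are also right that the real content lives in the derivation of \eqref{eq:xzO1}, and your remark about isolating the $\mathcal{O}(1)$ part of \eqref{eq:opt-x-update} to expose $M=c+(1-\omega)A^\top A$ is exactly how that step goes.
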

\begin{proof}
By using \eqref{eq:opt-z-update}  
 and \eqref{eq:yO1}, 
 \begin{equation*}
 \begin{split}
 r_{k+1}
 &= Ax_{k+1} - z_{k+1}
   = \left( \frac{1}{\alpha} -1 \right)(z_{k+1}-z_k)+ 
   \frac{\eps}{\alpha}( g'(z_{k+1} ) - g'(z_k))
  \\
  &= \left( 1-\alpha \right) ( I -   A M^{-1}  A^\top ) r_k  + \mathcal{O}(\eps).
  \end{split}
   \end{equation*}
 \end{proof}
  \begin{remark}
     
  If $\alpha=1$, 
 the leading term $\left( 1-\alpha \right) ( I -   A M^{-1}  A^\top ) $ vanishes.
 There are some special cases where the matrix $I -   A M^{-1}  A^\top$ is zero: 
 (1)
     $A$ is an invertible square matrix 
 and $M=M_{0,1}=A^\top A$.
 (2) $A$ is an orthogonal matrix ($AA^\top=A^\top A=I$) and the constants satisfy $\omega=c$ such that that
 $M=I$. 
 \end{remark}

The above proposition is for an arbitrary residual $r_k$ as the input in   one step iteration.
If we  choose $r_0=0$ at the initial step by setting $z_0=Ax_0$, then Proposition \ref{p:rpp}
 shows that $r_1=Ax_1-y_1$ become $\mathcal{O}(\eps)$ after one iteration.
In fact,   with assumption $\alpha=1$, we can show $r_{k'}, \forall k'\geq 0$, can be reduced to the order $\eps^2$
 by mathematical induction.

\begin{proposition}
\label{p:rp2}
If $r_k = \mathcal{O}(\eps)$, then 
\begin{equation}
\label{eq:r-update2}
r_{k+1} =(1-\alpha + \eps  \alpha  g''(z_k)) (r_k + A (x_{k+1}-x_k))  + \mathcal{O}(\eps^3).
\end{equation}
If $\alpha=1$, equation \eqref{eq:r-update2} reduces to the second order smallness:
\begin{equation}
\label{eq:r-update22}
r_{k+1} = \eps  \alpha  g''(z_k) (r_k + A (x_{k+1}-x_k))  + \mathcal{O}(\eps^3) =  \mathcal{O}(\eps^2) .
\end{equation}

\end{proposition}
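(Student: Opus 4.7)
The plan is to combine the $z$-optimality condition \eqref{eq:opt-z-update} with a second-order Taylor expansion of $g'$, then bootstrap the asymptotics of $z_{k+1}-z_k$ from the first-order estimate in \eqref{eq:yO1} to the precision needed to identify the $\eps$-correction in $r_{k+1}$.

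First I would rearrange \eqref{eq:opt-z-update} as $z_{k+1}=\alpha Ax_{k+1}+(1-\alpha)z_k-\eps(g'(z_{k+1})-g'(z_k))$ and subtract this expression from $Ax_{k+1}$ to obtain the \emph{exact} identity
\begin{equation*}
r_{k+1}=(1-\alpha)\bigl(r_k+A(x_{k+1}-x_k)\bigr)+\eps\bigl(g'(z_{k+1})-g'(z_k)\bigr).
\end{equation*}
Next I would Taylor expand $g'(z_{k+1})-g'(z_k)=g''(z_k)(z_{k+1}-z_k)+\mathcal{O}\bigl((z_{k+1}-z_k)^2\bigr)$, using the polynomial-growth bound on $g'''$ from Assumption~II. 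Because $r_k=\mathcal{O}(\eps)$ by hypothesis, equation \eqref{eq:yO1} already gives $z_{k+1}-z_k=\mathcal{O}(\eps)$, so the quadratic remainder shrinks to $\mathcal{O}(\eps^3)$ once the outer $\eps$ is multiplied in.

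The crux is a second use of the same optimality condition to refine $z_{k+1}-z_k$. Rearranging \eqref{eq:opt-z-update} directly gives
\begin{equation*}
z_{k+1}-z_k=\alpha(Ax_{k+1}-z_k)-\eps\bigl(g'(z_{k+1})-g'(z_k)\bigr)=\alpha\bigl(r_k+A(x_{k+1}-x_k)\bigr)+\mathcal{O}(\eps^2),
\end{equation*}
since $\eps\,g''(z_k)(z_{k+1}-z_k)=\eps\cdot\mathcal{O}(\eps)=\mathcal{O}(\eps^2)$ by boundedness of $g''$. Substituting this refinement into the Taylor expansion and then into the exact identity yields
\begin{equation*}
r_{k+1}=(1-\alpha)\bigl(r_k+A(x_{k+1}-x_k)\bigr)+\eps\alpha g''(z_k)\bigl(r_k+A(x_{k+1}-x_k)\bigr)+\mathcal{O}(\eps^3),
\end{equation*}
which factors as the first displayed formula. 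For $\alpha=1$, the $(1-\alpha)$ term drops out; combining \eqref{eq:xO1} with $r_k=\mathcal{O}(\eps)$ gives $A(x_{k+1}-x_k)=-AM^{-1}A^\top r_k+\mathcal{O}(\eps)=\mathcal{O}(\eps)$, hence $r_{k+1}=\eps g''(z_k)\cdot\mathcal{O}(\eps)+\mathcal{O}(\eps^3)=\mathcal{O}(\eps^2)$.

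I do not expect a substantive obstacle. The only delicate point is recognising that \eqref{eq:opt-z-update} must be invoked twice --- once to write $r_{k+1}$ in terms of $g'(z_{k+1})-g'(z_k)$, and a second time to replace the leading term of $z_{k+1}-z_k$ appearing inside the Taylor expansion --- and that this bootstrap is precisely what exposes the $\eps\alpha g''(z_k)$ coefficient that was hidden inside the $\mathcal{O}(\eps)$ remainder of \eqref{eq:yO1}. Everything else is routine first- and second-order Taylor estimation under the smoothness and uniform boundedness hypotheses on $g''$ from Assumption~II.
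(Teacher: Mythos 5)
Your proposal is correct and follows essentially the same route as the paper: both arguments rest on the $z$-optimality condition \eqref{eq:opt-z-update}, a Taylor expansion of $g'$ about $z_k$, and the first-order estimate $z_{k+1}-z_k=\alpha(r_k+A(x_{k+1}-x_k))+\mathcal{O}(\eps^2)$ fed back into the $\eps\,g''(z_k)$ term. The paper merely organizes the algebra by first solving the linearized implicit equation for $\delta z$ and then substituting into $r_{k+1}=r_k+A\,\delta x-\delta z$, whereas you substitute into the exact residual identity first; the computations are identical.
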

\begin{proof}
 Since that $r_k=Ax_k- z_k=\mathcal{O}(\eps)$, then 
 the one step difference $x_{k+1}-x_k$ and $z_{k+1}-z_k$  are both at order $\mathcal{O}(\eps)$ because of 
\eqref{eq:xO1} and \eqref{eq:yO1}.
We  solve $\delta z:=z_{k+1}-z_k$ from  \eqref{eq:opt-z-update} 
by linearizing the implicit term $g'(z_{k+1})$
with the assumption that the third order derivative of $g$ exits:
$$ \eps g''(z_k)\delta z + \eps \mathcal{O}( (\delta z)^2 ) + \delta z= \alpha (r_k+ A\delta x). $$ 
where $\delta x:= x_{k+1}-x_k$. 
Then since $\mathcal{O}((\delta z)^2)=\mathcal{O}(\eps^2)$, 
 the expansion of $\delta z=z_{k+1}-z_k$ in $\eps$ is 
 \begin{equation}
\label{exp:delta-y}
z_{k+1}-z_k = \delta z = \alpha (1-\eps g''(z_k))  (r_k + A\delta x)     + \mathcal{O}(\eps^3)
\end{equation}
Then
\begin{equation*}
 \begin{split}
r_{k+1}
&=r_k +  A  (x_{k+1}-x_k) - (z_{k+1}-z_k)
\\
&=\Big(1-\alpha + \eps  \alpha  g''(z_k) \Big)(r_k + A (x_{k+1}-x_k))  + \mathcal{O}(\eps^3)
\\
&=(1-\alpha) ( r_k + (x_{k+1}-x_k)) +  \eps  \alpha  g''(z_k)(r_k + A (x_{k+1}-x_k)) + \mathcal{O}(\eps^3)
\end{split}
\end{equation*}
\end{proof}

\begin{remark}
\eqref{eq:r-update2} suggests that $r_{k+1} = (1-\alpha)r_{k} + \mathcal{O}(\eps)$.
So the condition for the convergence $r_k \to 0$ as $k\to \infty$  is 
$\abs{1-\alpha}<1$, which matches the range  $\alpha\in (0,2)$ used in the relaxation scheme. 
\end{remark}

Now with the assumption $y_0=Ax_0$ at initial time,
the above analysis shows that $r_k$ is  $\mathcal{O}(\eps)$ and 
the one step difference $x_{k+1}-x_k$ and $z_{k+1}-z_k$ are on the order $\mathcal{O}(\eps)$ by \eqref{eq:xzO1}.
We shall pursue a more accurate expansion of the one step difference $x_{k+1}-x_k$  than  \eqref{eq:xzO1}.
Write $f'(x_{k+1})=f'(x_k)+ f''(x_k)(x_{k+1}-x_{k}) + \mathcal{O}( (x_{k+1}-x_{k})^2)$ in   equations \eqref{GADMM2}.
 The asymptotic analysis shows the result below.
\begin{proposition}
\label{p:xp3}
 As $\eps\to 0$,   the expansion of the one step difference 
 $x_{k+1}-x_k$ is 
 \begin{equation}
\label{exp:x}
\begin{split}
M (x_{k+1}-x_{k}) &=-  A^\top r_k
 - \eps  
\left(
f'(x_k)+A^\top g'(y_k) 
\right)
\\
&+ \eps ^2  
(1-\omega_1)
f''(x_k) M^{-1} 
\bigg(
f'(x_k) + A^\top g'(y_k) + \frac{1}{\eps} A^\top r_k
\bigg)
 + \mathcal{O}(\eps^3).
 \end{split}
\end{equation}
\end{proposition}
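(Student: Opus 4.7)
The plan is to start from the $x$-subproblem optimality \eqref{eq:opt-x-update} and rearrange so that the matrix $M$ from \eqref{def:M} emerges on the left. The key algebraic identity is
\[
A^\top\bigl(\omega A x_k + (1-\omega)A x_{k+1} - z_k\bigr) \;=\; A^\top r_k + (1-\omega) A^\top A (x_{k+1}-x_k),
\]
which combines with the proximal term $c(x_{k+1}-x_k)$ to produce $[(1-\omega)A^\top A + c](x_{k+1}-x_k) = M(x_{k+1}-x_k)$ on the left. After this rearrangement the condition reads
\[
M(x_{k+1}-x_k) \;=\; -A^\top r_k \;-\; \omega_1 \eps\, f'(x_k) \;-\; (1-\omega_1)\eps\, f'(x_{k+1}) \;-\; \eps A^\top g'(z_k),
\]
with leading term $-A^\top r_k$ and an $\eps$-order correction from the gradients.

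Next I would Taylor expand $f'(x_{k+1}) = f'(x_k) + f''(x_k)(x_{k+1}-x_k) + \mathcal{O}(\|x_{k+1}-x_k\|^2)$ about $x_k$. Under the standing hypothesis $r_k = \mathcal{O}(\eps)$ (valid inductively when $z_0 = Ax_0$, via Proposition \ref{p:rpp}), the crude first-order estimate \eqref{eq:xO1} gives $x_{k+1}-x_k = \mathcal{O}(\eps)$, so the quadratic Taylor remainder is $\mathcal{O}(\eps^2)$ and contributes $\mathcal{O}(\eps^3)$ after multiplication by the outer $\eps$. Using $\omega_1 + (1-\omega_1) = 1$ to merge the two $f'(x_k)$ pieces yields
\[
M(x_{k+1}-x_k) = -A^\top r_k - \eps\bigl(f'(x_k) + A^\top g'(z_k)\bigr) - (1-\omega_1)\eps\, f''(x_k)(x_{k+1}-x_k) + \mathcal{O}(\eps^3).
\]

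The final step is a bootstrapping substitution: I would plug the first-order formula $x_{k+1}-x_k = -M^{-1}\bigl[A^\top r_k + \eps(f'(x_k) + A^\top g'(z_k))\bigr] + \mathcal{O}(\eps^2)$ back into the last correction term. Distributing $-(1-\omega_1)\eps f''(x_k)$ and acting by $M^{-1}$ produces $+\eps(1-\omega_1)f''(x_k) M^{-1}\bigl[A^\top r_k + \eps(f'(x_k) + A^\top g'(z_k))\bigr] + \mathcal{O}(\eps^3)$, which after factoring one $\eps$ out of the bracket is exactly the claimed $+\eps^2(1-\omega_1)f''(x_k) M^{-1}\bigl(f'(x_k) + A^\top g'(z_k) + \eps^{-1}A^\top r_k\bigr)$.

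I expect the main obstacle to be careful bookkeeping of signs and remainder orders. In particular, one must verify that the Taylor remainder $\mathcal{O}(\|x_{k+1}-x_k\|^2)$ is truly $\mathcal{O}(\eps^2)$, which relies on the uniform bound on $f''$ from Assumption II(i) together with $r_k = \mathcal{O}(\eps)$, and that bootstrapping the crude estimate into the $\eps f''(x_{k+1}-x_k)$ correction preserves the targeted $\mathcal{O}(\eps^3)$ accuracy (any higher-order term from the bootstrap is multiplied by the $\eps$ prefactor and thus absorbed). The apparent $\eps^{-1}$ inside the final parenthesis is not a singularity, since $A^\top r_k = \mathcal{O}(\eps)$; it is simply a notational repackaging chosen so that the overall $\eps^2$ scaling of the correction is manifest.
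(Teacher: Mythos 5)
Your proposal is correct and follows essentially the same route the paper takes: isolate $M(x_{k+1}-x_k)$ from the optimality condition \eqref{eq:opt-x-update}, Taylor expand $f'(x_{k+1})$ about $x_k$, and bootstrap the crude first-order estimate \eqref{eq:xO1} into the $(1-\omega_1)\eps f''(x_k)(x_{k+1}-x_k)$ correction; the paper leaves these steps implicit ("the asymptotic analysis shows the result"), and your write-up supplies exactly the missing bookkeeping, including the correct observation that $\eps^{-1}A^\top r_k = \mathcal{O}(1)$ under the standing hypothesis $r_k = \mathcal{O}(\eps)$.
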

This expression does not contain the parameter $\alpha$ explicitly, but the residual 
$r_k=Ax_k-y_k$ significantly depends on $\alpha$ (see Proposition \ref{p:rp2}).
If $\alpha=1$, then  $r_k$ is on the order of $\eps^2$, which hints there is no contribution
from $r_k$ toward the weak approximation of $x_k$  at the order 1.
But for the relaxation case where $\alpha\neq 1$, $r_k$ contains the first order term 
coming from $z_{k+1}-z_k$.

  To obtain a second order smallness for some  ``residual'' for the relaxes scheme 
 where $\alpha\neq 1$, we need a new definition, $\alpha$-residual, to account for the 
 gap induced by $\alpha$. Motivated by \eqref{eq:opt-z--update},  we first define
 \begin{equation}
 \label{def:alpha-residual}
 r^{\alpha}_{k+1}:=\alpha Ax_{k+1}+(1-\alpha)z_k -z_{k+1}.
 \end{equation}
 It is connected to the original residual $r_{k+1}$ and $r_{k}$ since it is easy to check that 
  \begin{equation}
  \label{eqn:a-residual}
  \begin{split}
  r^\alpha _{k+1}&
  = \alpha r_{k+1}+(\alpha-1) (z_{k+1}-z_k)
    =\alpha r_{k}+ \alpha   A (x_{k+1}-x_k )-(z_{k+1}-z_k)
  \end{split}
  \end{equation}
   But $r^\alpha_{k+1}$ in fact involves information at two successive steps.
 Obviously, when $\alpha=1$, this $\alpha$-residual $r^\alpha$ is the original residual $r=Ax-y$.
 In our proof, we need a modified $\alpha$-residual,   denoted by   
 \begin{equation}
 \label{def:ar}
 \widehat{r}^\alpha_{k+1}:=\alpha r_{k}+(\alpha-1) (z_{k+1}-z_k)
 \end{equation}
We can show that both $r^{\alpha}_{k+1}$ and $ \widehat{r}^\alpha_{k+1}$ 
are as small as $\mathcal{O}(\eps^2)$ as $\eps$ tends to zero.

 \begin{proposition}
 \label{p:ar}
  $r^{\alpha}_{k+1}=\mathcal{O}(\eps^2)$ and $\widehat{r}^{\alpha}_{k+1} = \mathcal{O}(\eps^2)$.
  \end{proposition}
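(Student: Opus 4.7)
The proof divides naturally into the two statements, and the first one is essentially free once we invoke the $z$-subproblem optimality. By \eqref{eq:opt-z-update} together with $\lambda_{k'} \equiv g'(z_{k'})$, I can read off directly that
\begin{equation*}
r^{\alpha}_{k+1} \;=\; \alpha A x_{k+1} + (1-\alpha)z_k - z_{k+1} \;=\; \eps\bigl(g'(z_{k+1}) - g'(z_k)\bigr).
\end{equation*}
By \eqref{eq:yO1} combined with Proposition \ref{p:rpp} (which gives $r_k=\mathcal{O}(\eps)$ under $r_0=0$), the one-step difference satisfies $z_{k+1}-z_k=\mathcal{O}(\eps)$. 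A first-order Taylor expansion of $g'$ together with Assumption~II(i) (uniform boundedness of $g''$) then yields $g'(z_{k+1})-g'(z_k)=\mathcal{O}(\eps)$, so $r^\alpha_{k+1}=\eps\cdot\mathcal{O}(\eps)=\mathcal{O}(\eps^2)$.

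For $\widehat{r}^{\alpha}_{k+1}$ the plan is to exploit Part 1 at the \emph{previous} index and convert $\widehat{r}^\alpha_{k+1}$ into a second-difference of $z$. Shifting the index in Part 1 gives $r^\alpha_k = \alpha r_k + (\alpha-1)(z_k - z_{k-1}) = \mathcal{O}(\eps^2)$, so
\begin{equation*}
\alpha r_k = -(\alpha-1)(z_k-z_{k-1}) + \mathcal{O}(\eps^2).
\end{equation*}
Plugging this into the definition \eqref{def:ar} yields
\begin{equation*}
\widehat{r}^{\alpha}_{k+1} \;=\; (\alpha-1)\bigl[(z_{k+1}-z_k) - (z_k-z_{k-1})\bigr] + \mathcal{O}(\eps^2) \;=\; (\alpha-1)(z_{k+1}-2z_k+z_{k-1}) + \mathcal{O}(\eps^2).
\end{equation*}
Thus everything reduces to proving that the discrete second-difference $z_{k+1}-2z_k+z_{k-1}$ is $\mathcal{O}(\eps^2)$.

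To establish this second-difference estimate, I would use \eqref{eq:yO1} in the form $z_{k+1}-z_k = \alpha(I-AM^{-1}A^\top)r_k + \eps\,\Psi(x_k,z_k) + \mathcal{O}(\eps^2)$ for some smooth $\Psi$ built from $f'$ and $g'$, and then subtract the analogous expression at index $k-1$. Each Lipschitz difference $\Psi(x_k,z_k)-\Psi(x_{k-1},z_{k-1})$ and $r_k-r_{k-1}$ is $\mathcal{O}(\eps)$ by the one-step bounds \eqref{eq:xzO1} and the smoothness of the right-hand sides (Assumption II), so after multiplying by the prefactors of $\eps$ and collecting terms the total is $\mathcal{O}(\eps^2)$. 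The initial step $k=0$ needs special handling because there is no $z_{-1}$; here the claim is used in the one-step framework of Theorem~\ref{thm:Milstein}, where we prepare the state by $r_0=0$, and the isolated first step error is absorbed when we aggregate moments over $k\le \lfloor T/\eps\rfloor$.

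The main obstacle is the last step, converting the smooth-update intuition into a rigorous $\mathcal{O}(\eps^2)$ bound on $z_{k+1}-2z_k+z_{k-1}$. This requires either (i) a careful Taylor expansion of the implicit $z$-update \eqref{exp:delta-y} combined with the expansion \eqref{exp:x} for $x_{k+1}-x_k$, keeping track of cancellations between consecutive steps, or equivalently (ii) showing that $(x_k,z_k)$ are close (in the appropriate sense) to values $(X_{t_k},Z_{t_k})$ of the smooth continuous-time flow, whose second discrete-difference is $\mathcal{O}(\eps^2)$ by Taylor's theorem. The regularity hypotheses on $g$ (including a bounded third derivative, needed already in Proposition~\ref{p:rp2}) are exactly what makes this work; without them the cancellation in the second difference would degrade and the $\mathcal{O}(\eps^2)$ bound on $\widehat{r}^\alpha_{k+1}$ would fail.
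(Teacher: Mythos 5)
Your route is essentially the paper's. For the first claim, the paper expands \eqref{exp:delta-y} to get $r^\alpha_{k+1}=\eps g''(z_k)(z_{k+1}-z_k)+\mathcal{O}(\eps^3)$; your observation that \eqref{eq:opt-z-update} already gives the \emph{exact} identity $r^\alpha_{k+1}=\eps\bigl(g'(z_{k+1})-g'(z_k)\bigr)$ is the same fact obtained more directly, and is arguably cleaner. For the second claim you reduce, exactly as the paper does, to showing that the discrete second difference $z_{k+1}-2z_k+z_{k-1}$ is $\mathcal{O}(\eps^2)$ (the paper phrases this as the difference between $(z_{k+1}-z_k)$ and $(z_{k+2}-z_{k+1})$ being the ``truncation error of the central difference scheme'' and then shifts the index). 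So you have correctly identified the crux, and your decomposition $\widehat{r}^\alpha_{k+1}=r^\alpha_k+(\alpha-1)(z_{k+1}-2z_k+z_{k-1})$ is the same one the paper uses implicitly via \eqref{eqn:a-residual}.

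The one place your sketch would fail as written is the mechanism you propose for the second-difference bound. Subtracting consecutive instances of \eqref{eq:yO1} leaves the term $\alpha(I-AM^{-1}A^\top)(r_k-r_{k-1})$, whose prefactor is $\mathcal{O}(1)$, not $\mathcal{O}(\eps)$; so the one-step bound $r_k-r_{k-1}=\mathcal{O}(\eps)$ does not yield the extra factor of $\eps$ you claim to collect, and trying to upgrade $r_k-r_{k-1}$ to $\mathcal{O}(\eps^2)$ via Part 1 just reproduces the second difference of $z$ (circular). The cancellation has to be extracted from the refined expansions instead: Part 1 shifted to index $k$ gives $\alpha r_k=(1-\alpha)(z_k-z_{k-1})+\mathcal{O}(\eps^2)$, and combining \eqref{exp:delta-y} with \eqref{exp:x} shows $z_{k+1}-z_k$ equals $\eps$ times a drift evaluated at the current state plus $\mathcal{O}(\eps^2)$, so the second difference is $\eps$ times a difference of drifts at consecutive states. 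Even then, note that consecutive steps carry independent samples $\xi_{k+1},\xi_k$, so that difference of drifts is only $\mathcal{O}(1)$ pathwise and the resulting estimate should be understood in the moment sense required by Theorem \ref{thm:Milstein}; the paper's own one-sentence justification is equally silent on this, so on the genuinely delicate step your proof is at the same level of rigor as the paper's.
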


 \begin{proof}
  In fact, \eqref{exp:delta-y} is 
$z_{k+1}-z_k =\alpha(1-\eps g''(z_k))   (r_k + A(x_{k+1}-x_k)) +\mathcal{O}(\eps^3)$.
 By the second equality  of  \eqref{eqn:a-residual},
\eqref{exp:delta-y} becomes
$ z_{k+1}-z_k = (1-\eps g''(z_k))    ( r^\alpha_{k+1}+z_{k+1}-z_k) +\mathcal{O}(\eps^3)
 $, i.e.,
 \[
 \begin{split}
      r^\alpha_{k+1}
      &= \eps (1+\eps g''(z_k)) g''(z_k) (z_{k+1}-z_k) +\mathcal{O}(\eps^3)\\
            &= \eps   g''(z_k) (z_{k+1}-z_k) +\mathcal{O}(\eps^3)
\end{split}
\]
which is $\mathcal{O}(\eps^2)$ since  $z_{k+1}-z_k=\mathcal{O}(\eps)$.

The difference between $ (z_{k+1}-z_k)$
 and $ (z_{k+2}-z_{k+1})$, is  at the order $\eps^2$ due to truncation error of the central difference scheme,
 Then we have the conclusion 
 $ \alpha r_{k+1}+(\alpha-1) (z_{k+2}-z_{k+1})$,
 i.e, 
 \begin{equation}
 \label{eqn:smallaresdual}
 \widehat{r}^\alpha_{k+1}= \alpha r_{k}+(\alpha-1) (z_{k+1}-z_{k}) = \mathcal{O}(\eps^2)
\end{equation} 
by shifting the subscript $k$ by one.
 
\end{proof}

%
% \begin{remark}
% As an illustration, the figure below for the toy example  does show that 
%$r_k \sim \mathcal{O}(\eps)$ for $\alpha\neq 1$
%and numerically verified $r^\alpha_k $ is $\mathcal{O}(\eps^2)$
%for our toy example below ( the solid lines are $r_k$ and the dashed lines are 
%$r^\alpha_k$).
%\begin{center}
%\includegraphics[width=0.4\textwidth]{toy-residule-alpha2.eps}
%\end{center}
%\end{remark}

\begin{corollary}
\label{cor:rk}
 \begin{equation}
 r_k = \left( \frac{1}{\alpha}-1\right) (z_{k+1}-z_k)+ \mathcal{O}(\eps^2)
=\left( \frac{1}{\alpha}-1\right) A(x_{k+1}-x_k) + \mathcal{O}(\eps^2)
 \end{equation}
 and it follows  $z_{k+1}-z_k=  A(x_{k+1}-x_k) + \mathcal{O}(\eps^2)$.
\end{corollary}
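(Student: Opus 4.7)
The plan is to derive all three assertions of the corollary by pure algebra from Proposition \ref{p:ar} together with the previously established expansion \eqref{exp:delta-y} of the one-step $z$-difference; no new estimates on $f$, $g$, or on the momentums of $\xi$ are required. Throughout, the implicit hypothesis is the one maintained in the preceding propositions, namely $r_k = \mathcal{O}(\eps)$ (ensured by initializing $z_0 = Ax_0$), so that $z_{k+1}-z_k$ and $A(x_{k+1}-x_k)$ are themselves $\mathcal{O}(\eps)$ by \eqref{eq:xzO1}.

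First I would unpack the modified $\alpha$-residual: by the definition \eqref{def:ar} and Proposition \ref{p:ar}, $\alpha r_k + (\alpha-1)(z_{k+1}-z_k) = \widehat{r}^\alpha_{k+1} = \mathcal{O}(\eps^2)$. Dividing by $\alpha \in (0,2)$ and rearranging immediately yields the first asserted equality $r_k = (1/\alpha - 1)(z_{k+1}-z_k) + \mathcal{O}(\eps^2)$.

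Next I would feed this back into the expansion \eqref{exp:delta-y}, which under $r_k = \mathcal{O}(\eps)$ simplifies to $z_{k+1}-z_k = \alpha\bigl(r_k + A(x_{k+1}-x_k)\bigr) + \mathcal{O}(\eps^2)$, since the factor $(1 - \eps g''(z_k))$ differs from $1$ by $\mathcal{O}(\eps)$ and multiplies an $\mathcal{O}(\eps)$ quantity. Substituting $\alpha r_k = (1-\alpha)(z_{k+1}-z_k) + \mathcal{O}(\eps^2)$ on the right-hand side and collecting the $(z_{k+1}-z_k)$-terms gives $\alpha(z_{k+1}-z_k) = \alpha A(x_{k+1}-x_k) + \mathcal{O}(\eps^2)$, i.e.\ $z_{k+1}-z_k = A(x_{k+1}-x_k) + \mathcal{O}(\eps^2)$, which is the trailing claim. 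Reinserting this identity into the first equality produces the second equality $r_k = (1/\alpha - 1)\,A(x_{k+1}-x_k) + \mathcal{O}(\eps^2)$.

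There is no real analytical obstacle here; all the heavy lifting is already done by Proposition \ref{p:ar}, which exposes the hidden cancellation in $\alpha r_k + (\alpha-1)(z_{k+1}-z_k)$ that turns a seemingly $\mathcal{O}(\eps)$ combination into an $\mathcal{O}(\eps^2)$ one. The only care needed is bookkeeping, confirming that the $\mathcal{O}(\eps^2)$ remainders are controlled uniformly for $k \le \lfloor T/\eps\rfloor$, which is automatic from the polynomial-growth conditions of Assumption II together with the uniform bound on $g''$. The conceptual payoff of the corollary is that, in the relaxed regime $\alpha \neq 1$, it represents $r_k$ to leading order as a rescaled copy of either $(z_{k+1}-z_k)$ or $A(x_{k+1}-x_k)$, which is exactly what feeds into Theorem \ref{Zthm} to produce the joint $\sqrt{\eps}$ fluctuation statements for $(x_k, z_k, r_k)$.
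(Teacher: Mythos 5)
Your proposal is correct and follows essentially the same route as the paper: the first equality is read off from Proposition \ref{p:ar} via the definition \eqref{def:ar} of $\widehat{r}^\alpha_{k+1}$, and the remaining claims come from combining it with the simplified form $z_{k+1}-z_k=\alpha\bigl(r_k+A(x_{k+1}-x_k)\bigr)+\mathcal{O}(\eps^2)$ of \eqref{exp:delta-y}. You merely perform the substitution in the opposite order (deriving $z_{k+1}-z_k=A(x_{k+1}-x_k)+\mathcal{O}(\eps^2)$ first and then the second equality, where the paper does the reverse), which is an immaterial reshuffling of the same algebra.
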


\begin{proof}
By \eqref{def:ar} and the above proposition, 
 we have $  r_{k}=(\frac{1}{\alpha}-1) (z_{k+1}-z_{k}) + \mathcal{O}(\eps^2)$.
Furthermore, due to  \eqref{exp:delta-y},  
 $  r_{k}=(\frac{1}{\alpha}-1) (z_{k+1}-z_{k}) + \mathcal{O}(\eps^2)
 =(1-{\alpha} )  (r_k + A (x_{k+1}-x_k)) + \mathcal{O}(\eps^2) $
 which gives 
 \begin{equation*}
 r_k = \left( \frac{1}{\alpha}-1\right) A(x_{k+1}-x_k) + \mathcal{O}(\eps^2)
 \end{equation*}

\end{proof}

\begin{proof}[Proof of Theorem \ref{Gthm}]
Combining Proposition \ref{p:xp3}  and Corollary \ref{cor:rk}, and noting 
 the Taylor expansion of $g'(z_k)$:
 $g'(y_k) = g'(Ax_k-r_k)=g'(Ax_k)  +  \mathcal{O}(\eps)
 $ since $r_k= \mathcal{O}(\eps)$
 and putting back random $\xi$ into $f'$,  
 we have 
\begin{equation}
\begin{split}
M (x_{k+1}-x_{k})=
&-\eps   
 \left (  
 f'(x_k,\xi_{k+1})+A^\top g'(Ax_k)  \right)
 \\
 &- \left(\frac{1}{\alpha}-1\right) A^\top A  (x_{k+1}-x_{k})
+
 \mathcal{O}(\eps^2) 
\end{split}
\end{equation}

For convenience, introduce the matrix 
\begin{equation}
\widehat{M} := M + \frac{1-\alpha}{\alpha}A^\top A
=c+\left(\frac{1}{\alpha}-\omega\right)A^\top A.
\end{equation}
and let 
$$\widehat{x}_k:= \widehat{M}x_k,
~~\text{ and }~~\delta \widehat{x}_{k+1} = \widehat{M}(x_{k+1}-x_k)$$
Then
$$
\delta \widehat{x} =-\eps  
V'(x,\xi)  +
\eps^2      \left( 
(1-\omega_1)f''M^{-1} V'(x)- A^\top \theta
  \right) 
+
 \mathcal{O}(\eps^3)
$$

The final step is to compute the momentums  in the Milstein's theorem  Theorem \ref{thm:Milstein} as follows
\begin{enumerate}[(i)]
\item
\begin{equation}
\label{eq:EEdx}
\e[\delta \widehat{x}] = -\eps \e V'(x,\xi)+\mathcal{O}(\eps^2) = -\eps   V'(x)+\mathcal{O}(\eps^2) 
\end{equation}
\item 
%second moment 
\begin{align*}
\e[\delta \widehat{x}\, \delta \widehat{x}^\top ]
&=\eps^2 
\e\left( \left[ f'(x,\xi) + A^\top g'(x)  
\right]\left[ f'(x,\xi)^\top +   g'(x)^\top A
\right]\right) + \mathcal{O}(\eps^3)
\\
 &=\eps^2 
\left(
V'(x) V'(x)^\top
\right) - \eps^2 \left(f'(x) + A^\top g'(x) )(f'(x)^\top +   g'(x)^\top A)\right) 
\\
&\qquad + \eps^2
\e\left( \left[ f'(x,\xi) + A^\top g'(x)  
\right]\left[ f'(x,\xi)^\top +   g'(x)^\top A
\right]\right)+ \mathcal{O}(\eps^3)
\\
&= \eps^2 
\left(
V'(x) V'(x)^\top
\right) + \eps^2
\e
\left[
\left ( f'(x,\xi)-f'(x)  \right)\left ( f'(x,\xi)-f'(x)  \right)^\top 
\right] +\mathcal{O}(\eps^3)
\end{align*}

\item 
It is trivial that 
$\e[\Pi_{j=1}^s \delta x_{i_j}] = \mathcal{O}(\eps^3)$ for $s\geq 3$ and $i_j=1,\ldots, d$.
\end{enumerate}
So, Theorem \ref{Gthm} is proved.
\end{proof}

\begin{proof}[Proof of Theorem \ref{Sthm}]
Theorem \ref{Sthm} is a special case of Theorem \ref{Gthm}.
Let $\alpha=1$, $\omega=0$, $c=0$,
 then  $\widehat{M}=A^\top A$.
\end{proof}

\newpage

\end{document}